\def\section{\@startsection{section}{1}%
 \z@{.7\linespacing\@plus\linespacing}{.5\linespacing}%
 {\normalfont\bfseries\scshape\centering}}
\def\subsection{\@startsection{subsection}{2}%
	\z@{.5\linespacing\@plus\linespacing}{.5\linespacing}%
	{\normalfont\bfseries\scshape}}
\def\subsubsection{\@startsection{subsubsection}{3}%
 \z@{.5\linespacing\@plus\linespacing}{-.5em}
	{\normalfont\bfseries\itshape}}
\newcommand\mir[1] {\overset{\leftarrow}{#1}}
\newtheorem{theorem}{Theorem}[section]
\newtheorem{proposition}[theorem]{Proposition}
\newtheorem{definition}[theorem]{Definition}
\newtheorem{corollary}[theorem]{Corollary}
\newtheorem{property}[theorem]{Property}
\def\qed{$\hfill{\vrule height 3pt width 5pt depth 2pt}$}
\newfont{\bbold}{msbm10 scaled \magstep1}
\newfont{\bbolds}{msbm7 scaled \magstep1}
\newcommand{\ns}{\mbox{\bbold N}}
\newcommand{\zs}{\mbox{\bbold Z}}
\newcommand{\zss}{\mbox{\bbolds Z}}
\newcommand{\qs}{\mbox{\bbold Q}}
\newcommand{\cs}{\mbox{\bbold C}}
\newcommand{\Sn}{\mathfrak S}
\newcommand{\mb}{\overline m}
\newcommand{\cC}{\mathcal C}
\newcommand{\D}{\mathcal D}
\newcommand{\E}{\mathcal E}
\newcommand{\I}{\mathcal I}
\newcommand{\La}{\mathcal L}
\newcommand{\V}{\mathcal V}
\newcommand{\W}{\mathcal W}
\newcommand{\M}{\mathcal M}
\newcommand{\cH}{\mathcal H}
\newcommand{\K}{\mathcal K}
\newcommand{\cL}{\mathcal L}
\newcommand{\cR}{\mathcal R}
\newcommand{\PP}{\mathcal P}
\newcommand{\cP}{\mathcal P}
\newcommand{\X}{\mathcal X}
\newcommand{\U}{\mathcal U}
\newcommand{\C}{\mathcal C}
\newcommand{\G}{\mathcal G}
\newcommand{\beq}{\begin{equation}}
\newcommand{\eeq}{\end{equation}}
\newcommand{\gf}{generating function}
\newcommand{\gfs}{generating functions}
\def\emm#1,{{\em #1}}
\newcommand{\si}{\sigma}
\newcommand{\la}{\lambda}
\begin{document}
\title[Culminating paths]
{Culminating paths}

\author{Mireille Bousquet-M\'elou\and Yann Ponty
}

\address{CNRS, LaBRI, Universit\'e Bordeaux 1, 351 cours de la Lib\'eration,
	33405 Talence Cedex, France
\and LRI, Bât 490 Université Paris-Sud
91405 Orsay Cedex France }
\email{mireille.bousquet@labri.fr  \and yann.ponty@lri.fr}
\thanks{MBM was  partially supported by the French ``Agence Nationale
de la Recherche'', project SADA ANR-05-BLAN-0372. YB was partially supported by the action "Aspects mathématiques et algorithmiques des réseaux
	biochimiques et évolutifs ($\pi$-vert)" of ACI Nouvelles Interfaces des
	Mathématiques, French Ministry of Research.
	}
\keywords{}

\begin{abstract}
Let $a$ and $b$ be two positive integers.
A  culminating path is a path of $\zss^2$ that starts from
$(0,0)$, consists  of steps $(1,a)$
and $(1,-b)$, stays above the $x$-axis and ends at the highest ordinate
it ever reaches.  These paths were first encountered
in bioinformatics, in the analysis of
 similarity search algorithms.
They are also related to
certain models of Lorentzian gravity in theoretical physics.

We first show that the language on a two letter alphabet that
naturally encodes culminating paths is not context-free.

Then, we focus on the enumeration of culminating paths.
A step by step approach, combined with the kernel method, provides a closed form expression for the
\gf\  of culminating paths ending at a (generic) height $k$.  In the case $a=b$, we derive from
this expression the asymptotic behaviour of the number of culminating paths of length $n$. When
$a>b$, we obtain the asymptotic behaviour
 by a simpler argument. When  $a<b$,
we only determine the exponential growth  of the number of
culminating paths. 

Finally, we study  the uniform random generation of culminating
paths via various methods.
The rejection approach, coupled with a symmetry argument,
gives an algorithm that is linear when $a\ge b$, with no
precomputation stage nor non-linear storage required.
The choice of the best algorithm is not as clear when $a<b$. An
elementary  recursive approach  yields a linear algorithm after a
precomputation stage involving ${O}(n^3)$ arithmetic operations, but
we also present some alternatives  that may be more efficient in practice.
\end{abstract}
\maketitle
\date{\today}


\section{Introduction}
One-dimensional lattice walks on $\zs$ 
have been extensively studied over
the past 50 years. These walks usually start from the point $0$, and
take their steps in a prescribed
finite set $\mathcal S \subset \zs$. A large number of results are
now known on the enumeration
of sub-families of these walks, and can be obtained in a systematic
way once the set $\mathcal S$ is given. This includes the enumeration
of \emm bridges, (walks ending at 0), \emm meanders, (walks that always
remain at a non-negative level), \emm excursions, (meanders ending at
level 0), excursions of bounded height, and so
on. In particular, the nature of the associated \gfs\ is well
understood: these series are always algebraic, and even rational for
bounded
walks~\cite{AZ2,BaFl02,mbm-excursions,bousquet-petkovsek-recurrences,Duchon98,gessel-factorization,labelle,labelle-yeh,stanley-vol2}.
These algebraicity properties  actually reflect the fact that the
languages on the alphabet $\mathcal S$ that naturally encode these
families of walks are \emm context-free,, and even \emm regular, in the
bounded case.  In many papers, these
one-dimensional walks are actually described as \emm directed,\,
two-dimensional (2D) walks, upon replacing the starting point $0$ by $(0, 0)$
and every step $s$ by $(1,s)$. This explains why
excursions are often called \emm generalized Dyck paths, (the
authentic Dyck paths correspond to the case $\mathcal
S=\{1,-1\}$). This two-dimensional setting allows for a further
generalisation, with steps
of the form $(i,j)$, with $i>0$ and $j \in \zs$, but this does not
affect the nature of the associated languages and \gfs.
The uniform random generation of these walks has also been
investigated, through a recursive
approach~\cite{wilf77,flajoletcalculusINRIA,fullboltz} 
or using an anticipated rejection~\cite{BaPiSp92,lou97}.

This paper deals with
a new class of walks which has recently occurred in two independent
contexts, and
seems to have a more complicated structure than the above mentioned
classes: \emm culminating
walks,. A 2D directed walk is said to be culminating if each step ends
at a positive level, and the
final step ends at the highest level ever reached by the walk
(Figure~\ref{fig1}). 
We focus here on the case where the 
 steps are $(1,a)$ and $(1,-b)$, with $a$ and
$b$ positive, hoping that this encapsulates all the possible typical behaviours.

In the case $a=b=1$, culminating walks have recently been shown to be
in bijection with certain
\emm  Lorentzian triangulations,~\cite{DiFr00}, a class of
combinatorial objects studied in
theoretical physics as a model of discrete two-dimensional Lorentzian
gravity. Using a transfer
matrix approach, the authors derived the generating function for this
case. We give two shorter
proofs of their result. Also, while it is not clear how  the
method used in~\cite{DiFr00}
could be extended to the general  $(a,b)$-case, one of our approaches
works for arbitrary values of $a$ and $b$.

The general $(a,b)$-case appears in bioinformatics in the study of the
sensitivity of heuristic homology search algorithms, such as BLAST,
FASTA or FLASH~\cite{blast90,fasta88,flash93}. These algorithms aim at
finding the
 most conserved regions
(\emm similarities,\,)
between two genomic sequences (DNA, RNA, proteins...) while allowing certain
alterations in the entries of the sequences. In order to avoid the supposedly intrinsic quadratic
complexity of the deterministic algorithms, these heuristic algorithms first consider identical
regions of bounded size and extend them in both directions, updating the score with a bonus for a
match or a penalty for an alteration, until the score drops below a certain threshold. The
evolution of the score all the way through the final alignment turns out to be encoded by a
culminating walk.

In \cite{KuNoPoBIBE04}, we first studied the probability of a
culminating walk to contain certain 
patterns called \emm seeds,, as some recent algorithms make use of them to relax the mandatory
conservation of small \emph{anchoring} portions.
Then, we proposed
a variant of the recursive approach for the random generation of these walks.
Finally, we observed
that the naive rejection-based  algorithm, which consists in drawing
uniformly at random up and 
down steps  and rejecting the resulting walk if is not culminating,
seemed to be linear (resp. 
exponential) when $a>b$ (resp. $a<b$). This observation, which is
closely related to the asymptotic 
enumeration of culminating walks, is confirmed below in Section~\ref{sec:rejection-culminating}.

\begin{figure}[tb]
	\begin{center}
	\scalebox{0.65}{\input{PathToWord.pstex_t}}
	\end{center}
\caption{A culminating path (for $a=5$ and $b=3$) and the corresponding word.}
\label{fig1}
\end{figure}

\medskip
To conclude this introduction, let us fix the notation and summarize
the contents of this paper. Let $a$ and $b$ be two positive integers.
A walk (or path) of length $n$ is a sequence
$(0,\eta_0),\ldots,(n,\eta_{n})$  such that $\eta_0=0$ and
$\eta_{i+1}-\eta_i \in
\{a,-b\}$ for all $i$. The \emm height, of the walk is the largest
of the $\eta_i$'s, while the \emm final height, is $\eta_n$.
The walk is \emm culminating, if the two following conditions hold:
$$
\forall i \in [1,n],  \quad \eta_i>0
\quad \mbox{(Positivity),}
$$
$$
\forall i \in [0,n-1],  \quad \eta_i<\eta_n
\quad  \mbox{(Final record)}.
$$
See Figures~\ref{fig1} and~\ref{fig2} for examples  and counter-examples.
We encode every walk by a word on the alphabet $\{m, \overline m\}$ in
a standard way: each ascending step $(1,a)$ is replaced by a letter
$m$ and each descending step $(1,-b)$ is replaced by a letter
$\overline{m}$. We denote by $\{m, \mb\}^*$ the set of words on the
alphabet $\{m, \mb\}$.
From now on, we identify a path and the corresponding word.  Since
these objects are essentially one-dimensional, we will often use a 1D
vocabulary, saying, for instance, that our paths take steps $+a$ and
$-b$ (rather than $(1,a)$ and $(1,-b)$). We hope that this will not
cause any confusion. Without loss of generality, we restrict our study
to the case where  $a$ and $b$ are coprime. 

For any word $w$, we denote by $|w|_{m}$ (resp.~$|w|_{\overline m}$)
the number of occurrences of the letter $m$ (resp.~$\overline m$) that
it contains. We denote by $|w|$ the length of $w$.   The function
$\phi_{a,b}:\{m, \mb\}^*\to \mathbb{N}$   maps a
word to the final height of the corresponding walk. That is,
$\phi_{a,b}(w) = a|w|_{m}-b|w|_{\overline{m}}$.
The  culmination properties can be translated into  the following
language-theoretic definition:
\begin{definition}
The language of \emm culminating words, is the set
$\C^{a,b}\subset\{m,\overline{m}\}^*$ of words $w$ such that, for 
every non-empty prefix $w'$ of $w$:
$$
\phi_{a,b}(w')>0
\quad \mbox{(Positivity)},
$$
and, for 
every proper prefix $w'$ of $w$:
$$
\phi_{a,b}(w')<\phi_{a,b}(w) \quad \mbox{(Final record)}.
$$
\end{definition}

\begin{figure}[tb]
\begin{center}
	\scalebox{0.6}{\input{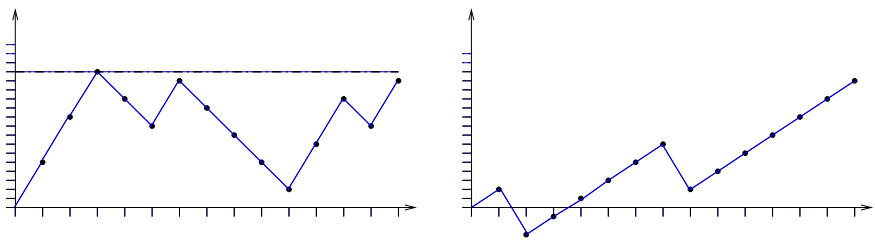}}
\end{center}
\caption{Two walks that are not culminating, violating the final
  record condition 	(left) or the positivity condition (right).} 
\label{fig2}
\end{figure}

The main result of Section~\ref{sec:languages} is that the language
$\C^{a,b}$ is not context-free.
In Section~\ref{sec:exact}, we obtain a closed form expression for
the \gf\ of culminating walks.
This expression is complicated, but we believe this only
reflects the complexity of this class of walks.
This enumerative section is
closely related to the recent work~\cite{mbm-excursions}, devoted to a
general study of excursions confined in a strip. In particular,
\emm symmetric functions, play a slightly surprising role in the proof
and statement of our results.
We then derive in Section~\ref{sec:asympt} the asymptotic number of
culminating walks, in the case $a\ge b$.
Our result implies  that, asymptotically, a positive fraction
of (general) $(a,b)$-walks are culminating if $a>b$. We prove that this
fraction tends to 0 exponentially fast if $a<b$.
More precisely, we determine  the exponential growth of the number of
culminating walks.
This asymptotic section uses the results obtained
in~\cite{BaFl02} on the exact and asymptotic enumeration of excursions
and meanders.
Finally, in Section~\ref{sec:random}, we present several algorithms for
generating uniformly at random culminating walks of a given
length. Our best algorithms are linear when $a\ge b$. When $a<b$, the
choice of the best algorithm is not obvious.
An elementary recursive approach yields a quasi-linear generating stage
but requires the precomputation and storage of $O(n^3)$ numbers.
We exploit in this section several 
generation schemes, like the recursive
method~\cite{wilf77,flajoletcalculusINRIA}, the rejection
method~\cite{Den96a} and Boltzmann samplers~\cite{fullboltz}.
Moreover, we address in Section~\ref{sec:random-meander} the random
generation of positive walks,
which is a preliminary step in some of our algorithms
generating culminating walks.
We have implemented our algorithms in Java, and we invite the reader
to generate his/her own paths at the address {\tt
  http://www.lri.fr/$\sim$ponty/walks}. 
Figure~\ref{fig:paths} shows random
culminating paths of length 1000 generated with our software, 
for various values of $a$ and $b$. 

 \begin{figure}[tb]
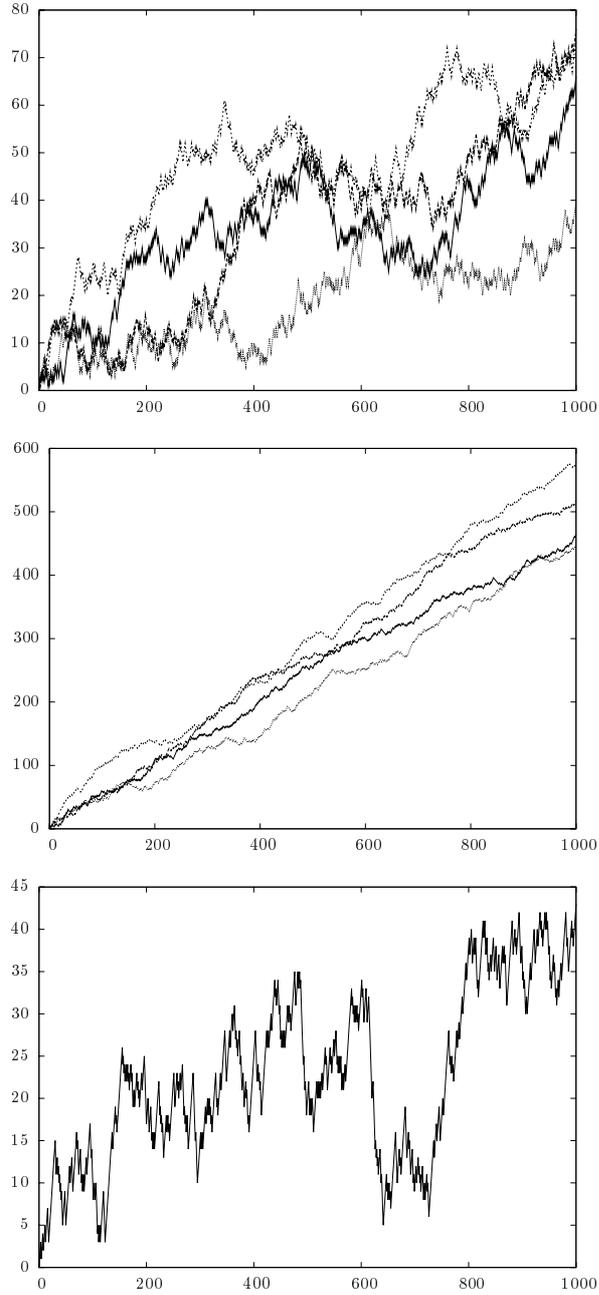

\begin{center}
\scalebox{0.65}{\input{Paths1-1.tex}}  
\scalebox{0.65}{\input{Paths2-1.tex}}  
\scalebox{0.65}{\input{Paths1-2-1.tex}}
\end{center}
\caption{Random culminating paths of size $1000$, when $(a,b)=(1,1)$,
  $(a,b)=(2,1)$,  $(a,b)=(1,2)$. 
In the first two cases, four paths
  are displayed, while for the sake of clarity, only one path is shown
in the third case.} 
\label{fig:paths}
\end{figure}

\section{Language theoretic properties} \label{sec:languages}

We denote by $\C^{a,b\Rightarrow k}$  the subset of $\C^{a,b}$ that
consists of  the walks (words) ending at  height $k$. It will be easily
seen that this language (for a fixed $k$) is regular. However, we
shall prove that the full language  $\C^{a,b}$ is not context-free. We
refer to~\cite{hopcroft} for definitions on languages.

\subsection{Culminating walks of bounded height}
\label{sec:automaton}
\begin{proposition} For all $ a,b,k\in \mathbb{N}$, the language
$\C^{a,b\Rightarrow k}$ of culminating words  ending at height $k$ is regular.
\end{proposition}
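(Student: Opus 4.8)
The plan is to describe, for fixed $a,b,k$, a finite automaton that recognizes $\C^{a,b\Rightarrow k}$; by Kleene's theorem this establishes regularity. We may assume $k\ge 1$, the case $k=0$ being trivial (the language is then $\{\epsilon\}$ or $\emptyset$ depending on the convention for vacuous prefix conditions).

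The starting point is the following elementary remark. If $w\in\C^{a,b\Rightarrow k}$ and $(0,\eta_0),\dots,(n,\eta_n)$ denotes the associated path, then $\eta_0=0$, $\eta_n=k$, and combining the positivity and final record conditions gives $0<\eta_i<k$ for every $i$ with $0<i<n$. Hence every vertex of the path has ordinate in the finite set $\{0,1,\dots,k\}$, and this is precisely what allows a \emph{finite} memory (the current height) to test membership step by step.

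Concretely, I would take the deterministic automaton with state set $Q=\{0,1,\dots,k-1\}\cup\{F,\bot\}$, initial state $0$, single accepting state $F$, and the following transitions. From a state $j\in\{0,\dots,k-1\}$, the letter $m$ leads to $j+a$ if $j+a<k$, to $F$ if $j+a=k$, and to $\bot$ if $j+a>k$; the letter $\overline m$ leads to $j-b$ if $j-b\ge 1$ and to $\bot$ otherwise. From $F$ or $\bot$, every letter leads to $\bot$. (When $k=1$ there are simply no intermediate states $j\in\{1,\dots,k-1\}$; the automaton is still well defined.) An immediate induction on the length of a prefix $w'$ of $w$ then shows that the run on $w'$ ends in $\bot$ as soon as some nonempty prefix of $w'$ violates positivity or some proper prefix of $w'$ reaches height $\ge k$, and otherwise ends in the state $\phi_{a,b}(w')$ if $\phi_{a,b}(w')<k$, and in $F$ if $\phi_{a,b}(w')=k$. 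Reading this for $w'=w$ shows that $w$ is accepted if and only if $w$ satisfies positivity, every proper prefix of $w$ has height $<k$, and $\phi_{a,b}(w)=k$; that is, if and only if $w\in\C^{a,b\Rightarrow k}$.

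There is no genuine obstacle here; the only point requiring a little care is the bookkeeping of the sink state $\bot$, which must be entered (and never left) both when the path would go non-positive and as soon as a step is taken from height $k$ or the value $k$ is overshot. The transition from $F$ to $\bot$ is exactly what encodes the fact that height $k$ may occur only at the very end of the walk.
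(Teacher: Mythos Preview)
Your proof is correct and takes essentially the same approach as the paper: both construct a deterministic finite automaton whose states record the current height (bounded between $0$ and $k$) together with a sink state, with the final state being the one corresponding to height $k$. Your automaton is in fact isomorphic to the paper's (your state $F$ is their state $k$), and you supply slightly more detail on correctness via the induction on prefixes.
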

\begin{proof}
The culminating paths of final height $k$  move inside
a \emm bounded, space.  This allows us to construct  a (deterministic)
finite-state automaton that recognizes these paths. The  states of
this automaton are the accessible heights (that is, $0,1, \ldots ,
k$), plus a \emph{garbage} state $\perp$. The initial state is $0$,
the final state is $k$, and the transition function $\delta$ is given, for
$0\le q <k$, by:
$$
\begin{array}{rclrcl}
\delta(q,m)&= & \left\{ \begin{array}{cl}
q+a & \mbox{if }q\le k-a,\\
\perp & \mbox{otherwise}
\end{array} \right.,&
 \delta(q,\mb) & =& \left\{ \begin{array}{cl}
q-b   &  \mbox{if }q>b,\\
\perp & \mbox{otherwise}
\end{array} \right.,\\
\end{array}$$
while
$$
\delta(k,\cdot) = \delta(\perp,\cdot)= \perp .
$$

Clearly, this automaton sends any word attempting to walk below  $0$
 (resp. above $k$) in the garbage $\perp$, where it will stay forever
 and therefore be rejected. Moreover, it  only accepts those words ending in
 the state $k$. Hence this automaton  recognizes exactly
 $\C^{a,b\Rightarrow k}$. Since the state space is finite,
 $\C^{a,b\Rightarrow k}$ is a regular language.
\end{proof}

\subsection{Unbounded culminating walks}

\begin{proposition} For all $a,b\in \mathbb{N}$, the language
$ \C^{a,b}$ of culminating walks is not context-free.
\end{proposition}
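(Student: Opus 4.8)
The plan is to deduce the statement from the closure of context-free languages under intersection with regular languages: I will exhibit a regular language $R$ such that $\C^{a,b}\cap R$ fails to be context-free, which immediately forbids $\C^{a,b}$ itself from being context-free. Throughout I use that $a$ and $b$ are coprime, as assumed in the paper (replacing $(a,b)$ by $(a/\!\gcd,b/\!\gcd)$ changes neither the heights' signs nor their comparisons, hence not the language). Concretely I take $R=m^{+}\overline{m}^{+}m^{+}$.

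First I would describe $L:=\C^{a,b}\cap R$ explicitly. A word $m^{i}\overline{m}^{j}m^{k}$ with $i,j,k\ge 1$ encodes the walk that climbs to height $ai$, descends to height $ai-bj$, then climbs to its final height $F=a(i+k)-bj$. The positivity condition is then equivalent to the single inequality $ai-bj\ge 1$, since the lowest level visited is $ai-bj$; and the final-record condition is equivalent to $ai<F$, i.e. $ak-bj\ge 1$, since the heights of proper prefixes are $\le\max(ai,\,F-a)$ and $F-a<F$ always, so the binding constraint is that the first peak $ai$ stays below $F$. Hence
$$L=\{\,m^{i}\overline{m}^{j}m^{k}\ :\ i,j,k\ge 1,\ ai-bj\ge 1,\ ak-bj\ge 1\,\}.$$

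Then I would apply the pumping lemma (Bar–Hillel) to $L$. Using $\gcd(a,b)=1$, choose $j_{0}$ large with $bj_{0}\equiv-1\pmod a$ and set $i_{0}=(bj_{0}+1)/a$, so that $w=m^{i_{0}}\overline{m}^{j_{0}}m^{i_{0}}\in L$ with both inequalities tight, and $i_{0},j_{0}\to\infty$; take $j_{0}$ so large that each of the three blocks is longer than the pumping length $p$. In a factorisation $w=uvxyz$ with $|vxy|\le p$ and $|vy|\ge 1$, the window $vxy$ meets at most two consecutive blocks (the two $m$-blocks are separated by $j_{0}>p$ letters). If $v$ or $y$ straddles a block boundary, then $uv^{2}xy^{2}z$ acquires a second $\overline{m}$-block, hence leaves $R$, hence leaves $L$. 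Otherwise $v$ and $y$ sit inside single blocks, and pumping with exponent $i$ gives $m^{\,i_{0}+\alpha s}\overline{m}^{\,j_{0}+\beta s}m^{\,i_{0}+\gamma s}$ with $s=i-1$, where $\alpha,\beta,\gamma\ge 0$, $\alpha+\beta+\gamma=|vy|\ge 1$, $\alpha\gamma=0$, and all block lengths stay positive for $s=-1$ because the original blocks exceed $p$. Since $w$'s defining inequalities were equalities, this word lies in $L$ exactly when $(a\alpha-b\beta)s\ge 0$ and $(a\gamma-b\beta)s\ge 0$; if this held for both $s=1$ and $s=-1$ we would get $a\alpha=b\beta=a\gamma$, so $\alpha=\gamma$, so (by $\alpha\gamma=0$) $\alpha=\beta=\gamma=0$, contradicting $\alpha+\beta+\gamma\ge1$. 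Thus one of $i=0$, $i=2$ ejects the word from $L$, contradicting the pumping lemma; so $L$, and therefore $\C^{a,b}$, is not context-free.

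The delicate point is not the computation above but realising that one should \emph{not} run the pumping lemma directly on $\C^{a,b}$: any sufficiently long culminating walk contains a short factor of zero net height that can be inserted wherever the walk has headroom, and iterating it keeps the walk culminating, so every long word of $\C^{a,b}$ is pumpable. Intersecting with $R=m^{+}\overline{m}^{+}m^{+}$ is precisely what removes this slack, because pumping any such loop destroys the three-block shape. The remaining care is purely bookkeeping: the boundary-straddling case and checking that the pumped block lengths remain positive, both of which push the pumped word out of $R$ or violate the inequalities in the desired direction.
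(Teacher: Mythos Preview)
Your proof is correct and follows essentially the same approach as the paper: intersect $\C^{a,b}$ with the regular zig-zag language (the paper uses $m^*\overline m^*m^*$, you use $m^+\overline m^+m^+$, which makes no difference here), pick the B\'ezout word $m^{i_0}\overline m^{j_0}m^{i_0}$ with $ai_0-bj_0=1$ and all three blocks longer than the pumping constant, and show every admissible factorisation fails the pumping lemma. Your unified bookkeeping via the triple $(\alpha,\beta,\gamma)$ with $\alpha\gamma=0$ is just a tidier packaging of the case analysis the paper presents in a table; the one inessential slip is the motivational remark that every long culminating walk contains a short zero-net-height factor (the walk $m^n$ has none), but this does not affect the argument.
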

\begin{proof}
Recall that the intersection of a context-free language and a regular
language is context-free~\cite{hopcroft}. Let  $\La $ be the following
regular language:
$\La =m^*.{\overline{m}}^*.m^*$.
It can be seen as the language of ``zig-zag'' paths.
 Let $\K=\C^{a,b}\cap\ \La$. It is easy to   see that
$$
\K = \{m^i.{\overline{m}}^j.m^k|\, i>0, \, bj<ai \hbox{ and }
bj<ak\}.
$$
Assume that  $\C^{a,b}$  is context-free. Then so is $\K $, and, by the
pumping lemma for context-free languages~\cite[Theorem 4.7]{hopcroft},
there exists $n \in \ns$ such that any  word $w \in \K $ of length at
least $n$
admits a factorisation $w=x.u.y.v.z$ satisfying  the following properties:
\begin{enumerate}
	\item[$(i)$] $|u.v|\ge 1$,
	\item[$(ii)$] $|u.y.v|\le n$,
	\item[$(iii)$] $\forall \ell \ge 0,
w_\ell  :=x. u^\ell.y. v^\ell.z\in \K $.
\end{enumerate}
Since $a$ and $b$ are coprime, there exist $i>n$ and $j>n$ such
that $ia-jb=1$ (this is the Bachet-Bezout theorem).
 Hence the word $w=m^{i}{\overline{m}}^{j}m^{i}$    belongs to $ \K $.
In the rest of the proof, we will refer to the first sequence of
ascending steps of $w$ as $A$, to the descending sequence as $B$ and
to the second ascending sequence    as $C$.
\begin{table}[bth]
\begin{center}
\begin{tabular}{|c|c|c|l|}\hline
Where is the factor $u.y.v$?   & $\ell$   & $w_\ell$  & Failing
condition \\
\hline
$A$             & $0$ & $m^{i-h}.{\overline{m}}^j.m^i$   &
		Pos.: $\phi(m^{i-h}.{\overline{m}}^j)=1-ah\leq0$\\
\hline
$B$             & $2$ & $m^{i}.{\overline{m}}^{j+h}.m^i$ &
		Pos.: $\phi(m^{i}.{\overline{m}}^{j+h})=1-bh\leq0$\\
\hline

$C$             & $0$ & $m^{i}.{\overline{m}}^{j}.m^{i-h}$ &
	 Fin. rec.: $\phi(w_\ell) =\phi(m^i)-ah \leq \phi(m^i)$\\
\hline

$A\cup B$ & & & \\
\ $|u|_{\overline{m}}.|u|_m+|v|_{\overline{m}}.|v|_m\neq 0$ & $2$ &
	 $m^{p}.{\overline{m}}^{k}.m^{k'}.{\overline{m}}^{p'}.m^{i}$ &
$w_\ell \notin \La$ (Too many {peaks})\\

&&&\\
				\ $u=m^k, \ v={\overline{m}}^{k'}$    & $2$ &
				$m^{i+k}.{\overline{m}}^{k'+j}.m^{i}$ &
 Final record: \\
 &&&     $\phi(w_\ell)=\phi(m^{i+k})+1 -bk'  \leq\phi(m^{i+k})$ \\
 \hline


$B\cup C$ & & & \\
\ $|u|_{\overline{m}}.|u|_m+|v|_{\overline{m}}.|v|_m\neq 0$ & $2$ &
	 $m^{i}.{\overline{m}}^{p}.m^{k}.{\overline{m}}^{k'}.m^{p'}$ &
$w_\ell \notin \La$ (Too many {valleys})\\

&&&\\
\ $u={\overline{m}}^k, \ v=m^{k'}$    & $2$ & $m^{i}.{\overline{m}}^{j+k}.m^{k'+i}$ & Pos.: $\phi(m^{i}.{\overline{m}}^{j+k})=1-kb\leq 0$ \\ \hline

\end{tabular}

\medskip
\caption{Why the pumping lemma is not satisfied.}
\label{fig4}      \end{center}
\end{table}

In Table~\ref{fig4}, we consider all  eligible  factorisations of $w$
of the form $w=x.u.y.v.z$. Five cases arise, depending on which part of $w$
contains the factor $u.y.v$. Condition $(ii)$ implies  that
this  factor cannot overlap simultaneously with the parts $A$ and
$C$. Each of   the cases $A\cup B$ and $B\cup C$ is further subdivided
into two cases, depending on whether $u$ and $v$ are \emm monotone, or not.

For each factorisation, the table gives a value of $\ell$ for which the
word  $w_\ell$  does not belong to $\K $. This is justified in the
rightmost column:
either $w_\ell$ does not belong to the set $\La$ of zig-zag paths,
or the positivity  condition does not hold, or the last step of
the walk is not a record.

Once all the possible factorisations have been  investigated and found
not to satisfy  the pumping lemma, we conclude that the languages $\K
$ and $\C^{a,b}$ are not context-free.
\end{proof}

\section{Exact enumerative results} \label{sec:exact}

In this section, we give a closed form expression for the  \gf\ of
$(a,b)$-culminating walks. More precisely, we give an expression for the
series counting culminating walks of height $k$, and then sum over
$k$. This summation makes the series a bit difficult to handle, for
instance to extract the asymptotic behaviour of the coefficients
(Section~\ref{sec:asympt}). We believe that this 
complexity
 is inherent to the problem. In particular, we prove that the \gf\ of
$(1,1)$-culminating walks is not only transcendental, but also
not \emm D-finite,.
 That is, it does not satisfy any linear differential equation with polynomial
coefficients~\cite[Ch.~6]{stanley-vol2}.

\subsection{Statement of the results and discussion}
\label{sec:statement}
Let us first state our results in the (1,1)-case and 
then
explain what form they  take in the general $(a,b)$-case.

\begin{proposition}
\label{propo-exact11}
Let $a=b=1$ and 
$k\ge 1$.  The length \gf\ of culminating paths of height $k$ is
$$
C_k(t)= \frac{t^k}{F_{k-1}}=t\frac{U_1-U_2}{U_1^k-U_2^k}=
\frac {1-U^2}{1+U^2}\frac{U^k}{1-U^{2k}},
$$
where
\begin{itemize}
	\item $F_k$ is the $k$th Fibonacci polynomial, defined by
$F_0=F_1=1$ and $F_k=F_{k-1}-t^2F_{k-2}$ for $k\ge 2$,
\item $U_1$ and $U_2$ are the two roots of the polynomial $u-t(1+u^2)$:
$$
U_{1,2}= \frac{1\mp\sqrt{1-4t^2}}{2t},
$$
\item  $U$ stands for any of the $U_i$'s.
\end{itemize}
The \gf\ of culminating walks,
\beq\label{C11-expr}
C(t)= \frac {1-U^2}{1+U^2}\sum_{k\ge 1}\frac{U^k}{1-U^{2k}},
\eeq
is not D-finite.
\end{proposition}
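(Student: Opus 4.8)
The plan is to exploit the closed form \eqref{C11-expr} together with the fact that $U$ is an algebraic function of $t$ (a root of $u - t(1+u^2)$), so that the only possible source of non-D-finiteness is the infinite sum $S(U) := \sum_{k\ge 1} U^k/(1-U^{2k})$. First I would recall that the class of D-finite series is closed under algebraic substitution: if $C(t)$ were D-finite, then, since $t$ is an algebraic function of $U$ (indeed $t = U/(1+U^2)$), the series $C$ viewed as a function of $U$ would again be D-finite in $U$; and multiplying by the rational function $(1+U^2)/(1-U^2)$ preserves D-finiteness, so $S(U)$ would be D-finite in $U$. Thus it suffices to prove that $S(U) = \sum_{k\ge 1} U^k/(1-U^{2k})$ is \emph{not} D-finite as a power series in $U$.

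The heart of the argument is then a singularity (or lacunary-type) analysis of $S(U)$. The key observation is that $S(U)$, as a function of the complex variable $U$, has poles at \emph{every} root of unity: the $k$-th term $U^k/(1-U^{2k})$ contributes poles at all $2k$-th roots of unity, and one checks (by a residue computation, summing the contributions of the finitely many terms that are singular at a fixed primitive $d$-th root of unity $\zeta$) that these poles do not cancel — $S$ has a genuine pole at each root of unity. Hence the unit circle $|U|=1$ is a natural boundary for $S$. A D-finite function of one variable, however, can only have finitely many singularities (its singularities lie among the roots of the leading polynomial coefficient of its defining ODE), so it cannot admit a natural boundary. This contradiction shows $S(U)$ is not D-finite, and therefore neither is $C(t)$.

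The main obstacle I expect is the \textbf{non-cancellation of the poles at roots of unity}: one must show that for a fixed primitive $d$-th root of unity $\zeta$, the sum of the residues coming from all indices $k$ with $d \mid 2k$ (equivalently $k$ a multiple of $d$ or of $d/2$ according to the parity of $d$) is nonzero. The cleanest route is probably to compute $\lim_{U\to\zeta}(1-U/\zeta)\,S(U)$: only terms with $2k$ a multiple of $d$ contribute a simple pole at $\zeta$, each with an explicitly computable residue of a fixed sign (all residues have the form $-\zeta^{m}/(2k\,\zeta^{2k-1})$ up to a common nonzero factor, and a short argument shows the relevant subseries converges to a nonzero limit). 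One should also handle the bookkeeping point $d$ odd versus $d$ even, and note separately that $U=1$ (which corresponds to $t=1/2$, the dominant singularity of $C$) is already a pole. An alternative, if the residue computation proves delicate, is to invoke a known result on lacunary-type series or on $\sum U^{n}/(1-U^{n})$ (a Lambert series, essentially $\sum_{n\ge1}\tau(n)U^n$ up to the $U^{2k}$ reindexing), whose natural boundary on $|U|=1$ is classical; then the same "D-finite $\Rightarrow$ finitely many singularities" principle finishes the proof.
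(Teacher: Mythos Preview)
Your overall strategy coincides with the paper's: reduce to the auxiliary series $S(U)=\sum_{k\ge1}U^k/(1-U^{2k})$ via the algebraic change of variable $t=U/(1+U^2)$, then argue that $S$ has the unit circle as a natural boundary and hence cannot be D-finite. The difference lies in how the natural boundary is established. You propose a direct pole analysis at each root of unity, correctly flagging the residue non-cancellation as the delicate step. The paper bypasses this entirely by invoking Carlson's theorem: since $S(U)$ has integer coefficients and radius of convergence $1$, it is either rational or admits the unit circle as a natural boundary; rationality is then excluded by the logarithmic behaviour $S(u)\sim\frac{1}{2(1-u)}\log\frac{1}{1-u}$ as $u\to1^-$, obtained from standard divisor-sum asymptotics. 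This is cleaner---no residue bookkeeping, no parity split on $d$---though it imports Carlson's theorem as a black box, whereas your route is more self-contained. Your fallback via the Lambert-series viewpoint (noting $S(u)=\sum_{n\ge1} d_{\mathrm{odd}}(n)\,u^n$, with $d_{\mathrm{odd}}(n)$ the number of odd divisors of $n$) is also valid and in fact close in spirit to the divisor-sum input the paper uses near $u=1$.

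One minor point: the proposition also asserts the three closed forms for $C_k(t)$, which you take for granted; the paper obtains them as the $b=1$ specialisation of its general $(1,b)$ formula.
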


\noindent The above expression of $C(t)$  is equivalent to the case $x=y=1$
of~\cite[Eq.(2.26)]{DiFr00}.

\medskip
The first expression of $C_k$, in terms of the Fibonacci polynomials,
is clearly rational.
As explained in Section~\ref{sec:automaton}, the language of
culminating walks of height $k$ is regular for all $a$ and $b$, so
that the series $C_k$ will always
be rational. Of course, $C_k$ is simply $0$ when
$k<a$. When $k=a$, there is only one culminating path, reduced to one
up step, so that $C_k=t$. More generally, the following property,
illustrated in Figure~\ref{fig:fewpaths}
and proved in Section~\ref{sec:property-proof}, holds.
\begin{property}\label{property:uniqueness}
 For $k\le a+b$, there is at most one culminating path of height $k$.
\end{property}

\begin{figure}[tb]
 \begin{center}
\scalebox{1}{\input{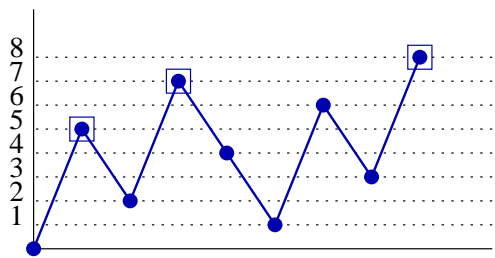}}
\end{center}
\caption{When $a=5$ and $b=3$, there is no culminating walk of
height $k$, for $k\in \llbracket 1, 8\rrbracket \setminus\{5,7,8\}$.
For $k=5,7,8$, there is exactly one culminating walk.}
\label{fig:fewpaths}
\end{figure}

As soon as $k>a$, culminating walks of height $k$ have at least two
steps. Deleting the first and last ones gives
$
C_k=t^2W_k,
$
where $W_k$ counts walks (with steps $+a, -b$) going from $a$ to $k-a$
on the segment $\llbracket 1, k-1\rrbracket$. General (and basic)
results on the enumeration of walks on a digraph
provide~\cite[Ch.~4]{stanley-vol1}:
\beq\label{ratform}
C_k=t^2W_k= t^2 \left(
(1-tA_k)^{-1}\right)_{a,k-a}
= t^2 \frac{N_k}{D_k} ,
\eeq
where $A_k=(A_{i,j})_{1\le i, j \le k-1}$ is the adjacency matrix of our
segment graph:
\beq\label{adjacency}
A_{i,j}=  \left\{
\begin{array}{ll}
1 & \hbox{if } j=i+a \hbox{ or } j=i-b, \\
0 & \hbox{otherwise,}
\end{array}
\right.
\eeq
$D_k$ is the determinant of $(1-tA_k)$ and $N_k/D_k$ is the entry $(a,k-a)$
of $(1-tA_k)^{-1}$.

We note from Proposition~\ref{propo-exact11}
 that, in the $(1,1)$-case, both $N_k$ and $D_k$ are especially
simple. Indeed, $N_k=t^{k-2}$, while $D_k=F_{k-1}$ satisfies a linear
recurrence relation (with constant coefficients) of order 2. We will
prove that, for all $a$ and $b$, both sequences $N_k$ and $D_k$
satisfy such a recurrence relation (of a larger order in general).
The monomial form of $N_k$ will hold as soon as $a=1$.

The second expression of $C_k$
given in Proposition~\ref{propo-exact11}
appears as a rational function of the
roots of the polynomial $u-t(1+u^2)$. Even though both series $U_1$
and $U_2$ are
algebraic (and irrational), the fact that $C_k$ is symmetric in $U_1$ and
$U_2$ explains why $C_k$ itself is rational.
 In general, we will
write $C_k$ as a \emm symmetric rational function, of the $a+b$ roots
of the polynomial $u^b-t(1+u^{a+b})$, denoted $U_1, \ldots,
U_{a+b}$.

The third expression of $C_k$ follows
from the fact that $U_1U_2=1$. In general, $t=U^b/(1+U^{a+b})$ for
 $U=U_i$, so
that it will always be possible to write $C_k$ as a rational function
of $U$. However, this expression will not  be always as simple as above.
The equivalence of  the three expressions of Proposition~\ref{propo-exact11}
follows easily from the fact that
$$
F_k=
\frac{1-U^{2k+2}}{(1-U^2)(1+U^2)^k}.
$$
This  can be proved by solving the recurrence relation satisfied by
the $F_k$'s --- or can be checked  by induction on $k$.

\medskip

 Let us now state our generalisation of Proposition~\ref{propo-exact11} to
 $(a,b)$-culminating walks. Our first expression of $C_k$, namely  the
 rational form~\eqref{ratform},
 involves the evaluation of two determinants of size (approximately)
 $k$. Our second expression of  $C_k$ will be a \emm fixed, rational function of $U_1,
 \ldots, U_{a+b}, U_1^k, \ldots, U_{a+b}^k$, symmetric in the $U_i$,
 which  involves two determinants of constant size $a+b$. The
 existence of such smaller determinantal forms for walks confined in a strip
 has already been  recognized in~\cite[Ch.~1]{banderier-these}.
More recently, the case of excursions confined in a strip has been
simplified and worked out in greater detail~\cite{mbm-excursions}.
As in~\cite{mbm-excursions},
our results will be expressed in terms of the \emm Schur functions,
 $s_\la$, which form
 one of the most important bases of symmetric functions in $n$
 variables $x_1, \ldots, x_n$:
 for any integer partition $\la$ with at most $n$ parts, $\la=(\la_1,\ldots,
 \la_n)$ with $\la_1\ge \la_2\ge \cdots\ge  \la_n\ge 0$,
\beq\label{schur-def}
s_\la(\X)= \frac{a_{\delta+\la}}{a_\delta},
\eeq
with $\X=(x_1, \ldots, x_n)$, $\delta=(n-1,n-2, \ldots, 1, 0)$ and
$
a_\mu= \det\left( x_i^{\mu_j}\right)_{1\le i,j\le n}.
$
We refer to~\cite[Ch.~7]{stanley-vol2} for generalities on symmetric functions.

\begin{proposition}
\label{propo-exact}
Let $k>a$.  With the above notation, the length \gf\ of
$(a,b)$-culminating paths of height
$k$ admits the following expressions:
$$
C_k(t)
= t^2 \left(
(1-tA_k)^{-1}\right)_{a,k-a}= t^2 \frac{N_k}{D_k} =
t \frac{s_\mu(\U)} {s_\la(\U)},
$$
 where $A_k$ is given by~\eqref{adjacency}, the $(a+b)$-tuple $\U=(U_1, \ldots,
 U_{a+b})$ is the collection of  roots of the  polynomial
 $u^b-t(1+u^{a+b})$, and the partitions $\la$ and $\mu $ are given by
$\la= (k-1)^a$ and $\mu=((k-1)^{a-1}, a-1) $.

 The determinant $D_k$ of $(1-tA_k)$ and the relevant
cofactor $N_k$ are respectively given by
\beq\label{Dk-Nk-Schur}
D_k=(-1)^{(a-1)(k-1)}t^{k-1}s_\la(\U) \quad \hbox{and} \quad
N_k=(-1)^{(a-1)(k-1)}t^{k-2}s_\mu(\U).
\eeq
Both sequences $N_k$ and $D_k$ satisfy a linear recurrence relation
with  coefficients in $\qs[t]$,
respectively of order ${a+b}\choose a$ and  ${a+b}\choose a-1$.
These orders are optimal.
\end{proposition}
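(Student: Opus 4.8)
The first two expressions for $C_k$ are already at hand from~\eqref{ratform}, where $D_k=\det(1-tA_k)$ and $N_k$ is the relevant cofactor; what remains is to identify these determinants with Schur functions and then to extract the recurrences. The plan is to adapt the kernel-method treatment of strip-confined walks from~\cite{mbm-excursions}. First I would observe that a culminating path of height $k$, with its first and last steps removed, is a walk with steps $+a,-b$ from $a$ to $k-a$ that stays in the strip $\llbracket 1,k-1\rrbracket$, so that the governing kernel is $K(u)=u^b-t(1+u^{a+b})$, with roots $U_1,\dots,U_{a+b}$. Following~\cite{mbm-excursions}, one writes the two-point resolvent entry $\big((1-tA_k)^{-1}\big)_{a,k-a}$ as a linear combination $\sum_\ell c_\ell U_\ell^{\,i}$ of harmonic solutions of the associated difference equation, imposes the absorbing conditions at heights $0$ and $k$, and solves the resulting $(a+b)\times(a+b)$ linear system by Cramer's rule; the determinant of the system is, up to sign and a power of $t$, the bialternant $a_{\delta+\la}(\U)$ with $\la=(k-1)^a$, and the relevant numerator corresponds similarly to $\mu=((k-1)^{a-1},a-1)$, yielding $C_k=t\,s_\mu(\U)/s_\la(\U)$. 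Comparing with $C_k=t^2N_k/D_k$, and using that $N_k$ and $D_k$ are coprime with $D_k(0)=1$, pins down~\eqref{Dk-Nk-Schur}, the signs and the powers $t^{k-1}$, $t^{k-2}$ being forced by degrees in $t$; alternatively one can evaluate $D_k$ directly by a transfer-matrix (or Lindström--Gessel--Viennot) argument and match it to the Schur function via the dictionary of~\cite{mbm-excursions}.

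For the recurrences, I would exploit the bialternant formula $s_\la(\U)=a_{\delta+\la}(\U)/a_\delta(\U)$. With $\la=(k-1)^a$, exactly $a$ columns of the defining matrix carry $k$-dependent exponents, namely $U_i^{\,k-1+b+j}$ for $j=0,\dots,a-1$ in row $i$; a Laplace expansion along those $a$ columns writes $a_{\delta+\la}(\U)$ as a sum over the $a$-element subsets $S\subseteq\{1,\dots,a+b\}$, the term for $S$ being $\big(\prod_{i\in S}U_i\big)^{k-1+b}$ times a product of two Vandermonde determinants (in the $U_i$ with $i\in S$, and in the $U_i$ with $i\notin S$) and a sign, all independent of $k$ and nonzero since the $U_i$ are distinct. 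Hence $D_k=\sum_{|S|=a}\gamma_S\,\rho_S^{\,k}$ with $\rho_S=(-1)^{a-1}t\prod_{i\in S}U_i$ and every $\gamma_S\ne0$, so $(D_k)_k$ satisfies the linear recurrence with characteristic polynomial $\prod_{|S|=a}(z-\rho_S)$, of degree $\binom{a+b}{a}$. Its coefficients are symmetric functions of $U_1,\dots,U_{a+b}$; since the kernel forces $e_j(\U)=0$ for $j\notin\{a,a+b\}$, $e_a(\U)=\pm 1/t$ and $e_{a+b}(\U)=\pm 1$, one checks that these coefficients actually lie in $\qs[t]$. The same computation applied to $N_k$ (now $\mu=((k-1)^{a-1},a-1)$, hence $(a-1)$-element subsets) gives a recurrence of order $\binom{a+b}{a-1}$.

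Finally, for optimality: since all the $\gamma_S$ are nonzero, the minimal recurrence for $(D_k)$ has order equal to the number of distinct values among the $\rho_S$, that is, among the products $\prod_{i\in S}U_i$ over $a$-element subsets $S$ (and similarly for $N_k$ with $(a-1)$-element subsets). The plan is to show these $\binom{a+b}{a}$ products are pairwise distinct, and I expect this to be the main obstacle. The natural route is a Puiseux analysis at $t=0$ of the roots of $K$: there are $b$ ``small'' roots behaving like the $b$-th roots of $t$ and $a$ ``large'' roots behaving like the $a$-th roots of $1/t$; the $t$-valuation of $\prod_{i\in S}U_i$ then detects how many small roots lie in $S$, and one separates subsets with the same count by pushing the expansions to higher order, or, if available, by an argument that the Galois group of $K$ over $\qs(t)$ is large enough to rule out any nontrivial multiplicative relation among disjoint subproducts of the $U_i$. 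This yields optimality for both $D_k$ and $N_k$.
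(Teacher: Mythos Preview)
Your plan is essentially the paper's: kernel method (plus the Jacobi--Trudi alternative) for the Schur expression, and a Laplace expansion of the bialternant for the recurrences. Two points deserve comment.

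First, your primary route to~\eqref{Dk-Nk-Schur} --- deducing $N_k$ and $D_k$ individually from the ratio $C_k=t\,s_\mu(\U)/s_\la(\U)$ by invoking coprimality --- is not justified. Here $N_k$ and $D_k$ are a specific cofactor and determinant, not a reduced fraction; you have not shown they are coprime, nor that $s_\mu(\U)$ and $t\,s_\la(\U)$ are coprime in a ring where the comparison makes sense. The paper does not attempt this: it applies the dual Jacobi--Trudi identity directly to each of $D_k=\det(1-tA_k)$ and $N_k=(-1)^k\det((1-tA_k)^{k-a,a})$, noting that with $e_0(\V)=1$, $e_a(\V)=-1/t$, $e_{a+b}(\V)=1$ (and all other $e_j(\V)=0$) the Jacobi--Trudi determinant for $s_\la(\V)$ \emph{is} $(-t)^{-(k-1)}D_k$, and similarly for $s_\mu$. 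This is precisely the route you mention as an ``alternative''; it is the one that works without further hypotheses.

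Second, for optimality the paper does not carry out your Puiseux/Galois programme. It observes that a shorter recurrence would force the polynomial $\tilde Q(z;\U)=\prod_{|S|=a}(1-z\prod_{i\in S}U_i)$ (and the analogous $Q$ with $|S|=a-1$) to factor over $\qs(t)$, and then cites \cite[Section~6]{mbm-excursions} for the irreducibility of $\tilde Q(z;\U)$, noting that the same argument handles $Q$. Your approach --- showing the subset-products are pairwise distinct --- would also give optimality once completed, but your sketch stops exactly where it gets delicate: the leading-order valuations only separate subsets containing different numbers of small roots, and distinguishing subsets with the same count genuinely requires either higher-order terms or the Galois-theoretic input you allude to. Citing the irreducibility result is both shorter and stronger.
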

Note that the expression of $C_k$ in terms of Schur functions still
holds for $k=a$.
Examples will be given below. For the moment, let us underline that the
case $a=1$ of this proposition takes a remarkably simple form,
which will be given a combinatorial explanation in
Section~\ref{sec:proof-coro}.
\begin{corollary}\label{coro:a=1}
When $a=1$, the  \gf\ of culminating walks of height $k\ge 1$ reads
$$
C_k(t)= \frac{t^k}{D_k} =  \frac{t} {h_{k-1}(\U)},
$$
where $h_i$ is the complete homogeneous symmetric function of degree $i$,
$D_k=1 $ for $1\le k \le b+1$ and $D_k=D_{k-1}-t^{b+1} D_{k-b-1}$
for $k> b+1$.
\end{corollary}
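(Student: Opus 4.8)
The plan is to obtain the corollary as a direct specialisation of Proposition~\ref{propo-exact} to $a=1$. First I would note that for $a=1$ the partitions occurring in the proposition degenerate: $\la=(k-1)^a$ reduces to the one-row partition $(k-1)$, and $\mu=((k-1)^{a-1},a-1)$ reduces to $((k-1)^0,0)$, i.e.\ the empty partition. Since the Schur function indexed by a single row is the corresponding complete homogeneous symmetric function, $s_{(k-1)}(\U)=h_{k-1}(\U)$, while $s_\emptyset(\U)=1$, and the sign $(-1)^{(a-1)(k-1)}$ equals $1$. Substituting into $C_k=t\,s_\mu(\U)/s_\la(\U)$ (which, as noted after Proposition~\ref{propo-exact}, is valid for all $k\ge a=1$) and into \eqref{Dk-Nk-Schur} then yields at once $N_k=t^{k-2}$, $D_k=t^{k-1}h_{k-1}(\U)$ and $C_k=t/h_{k-1}(\U)=t^2N_k/D_k=t^k/D_k$.

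It remains to justify the recurrence for $D_k$, which I would obtain from the generating function of the $h_i(\U)$. By construction $\U=(U_1,\dots,U_{b+1})$ is the family of roots of $P(u):=u^b-t(1+u^{b+1})$ (with $a=1$, so $a+b=b+1$), a polynomial of degree $b+1$ with leading coefficient $-t$; hence $\prod_{j=1}^{b+1}(u-U_j)=-P(u)/t=u^{b+1}-u^b/t+1$. Replacing $u$ by $1/z$ and multiplying by $z^{b+1}$ gives
\[
\prod_{j=1}^{b+1}(1-U_jz)=1-\frac{z}{t}+z^{b+1},
\qquad\text{so}\qquad
\sum_{i\ge0}h_i(\U)\,z^i=\frac{1}{1-z/t+z^{b+1}}.
\]
Comparing coefficients of $z^i$ gives $h_0(\U)=1$ and $h_i(\U)=h_{i-1}(\U)/t-h_{i-b-1}(\U)$ for $i\ge1$, with the convention $h_j(\U)=0$ for $j<0$. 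Now substituting $h_{k-1}(\U)=D_k/t^{k-1}$ and multiplying through by $t^{k-1}$ transforms this into $D_1=1$ together with $D_k=D_{k-1}-t^{b+1}D_{k-b-1}$ for $k\ge2$, where $D_{k-b-1}$ is to be read as $0$ when $k\le b+1$; in that range the recurrence is simply $D_k=D_{k-1}$, so $D_k=1$ for $1\le k\le b+1$, as stated.

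Since the argument is a bookkeeping specialisation, I do not expect a genuine obstacle. The only points needing a little care are the degeneration of $\mu$ to the empty partition (so that $N_k$ collapses to a monomial, the phenomenon given a combinatorial explanation in Section~\ref{sec:proof-coro}, not needed here) and the matching of index ranges in the last step, namely checking that the vanishing of $h_{k-b-2}(\U)$ for $k\le b+1$ is exactly what makes the $t^{b+1}D_{k-b-1}$ term disappear.
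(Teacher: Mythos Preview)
Your proof is correct and follows essentially the same route as the paper's: specialise Proposition~\ref{propo-exact} to $a=1$ so that $\mu=\emptyset$, $\lambda=(k-1)$, hence $N_k=t^{k-2}$, $D_k=t^{k-1}h_{k-1}(\U)$, and then derive the recurrence for $D_k$ from the recurrence $h_k(\U)=h_{k-1}(\U)/t-h_{k-b-1}(\U)$. The only cosmetic difference is that you obtain this last recurrence by writing down the generating function $\sum_i h_i(\U)z^i=1/\prod_j(1-U_jz)=1/(1-z/t+z^{b+1})$ explicitly, whereas the paper reads off $e_1(\U)=1/t$, $e_{b+1}(\U)=(-1)^{b+1}$ and invokes the standard $e$--$h$ relation; these are of course the same computation.
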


\noindent
{\bf Examples.} Let us illustrate Proposition~\ref{propo-exact} by
writing down explicitly the 
expression of
$C_k$ for a few values of $a$ and $b$. We use the determinantal
form~\eqref{schur-def} of Schur functions.

\smallskip
\noindent{\bf Case $a=b=1$.} Here $U_1$ and $U_2$ are the two roots of the
polynomial $u-t(1+u^2)$. The partition $\mu$ is empty, so that
$s_\mu=1$, while $\la=(k-1)$. This gives
$$
C_k=t
\frac{\left|
	\begin{array}{cc}
		 U_1 & 1\\
		 U_2 & 1
	\end{array}
\right|}
{\left|
	\begin{array}{cc}
		 U_1^k & 1\\
		 U_2^k & 1
	\end{array}
\right|}
=t\frac{U_1-U_2}{U_1^k-U_2^k},
$$
as in Proposition~\ref{propo-exact11}. The recurrence relations
satisfied by the polynomials $N_k$ and $D_k$ can always be worked out from
their expressions~\eqref{Dk-Nk-Schur}, as will be explained in
Section~\ref{sec:main-proof}. In the case $a=b=1$, one finds
$$
C_k=t^2N_k/D_k \quad \hbox{ with } \quad N_k=t^{k-2} \quad \hbox{
	and } \quad  D_k=D_{k-1}-t^2D_{k-2},
$$
with initial conditions $D_1=D_2=1$.

\smallskip
\noindent{\bf Case $a=1, b=2$.} Here $U_1, U_2, U_3$ are the three roots of the
polynomial $u^2-t(1+u^3)$. Again,  $\mu$ is empty and $\la=(k-1)$
(this holds as soon as $a=1$). One obtains
$$
C_k=t \frac{
\left|
	\begin{array}{ccc}
		U_1^2 & U_1 & 1\\
		U_2^2 & U_2 & 1\\
		U_3^2 & U_3 & 1
	\end{array}
\right|
}
{\left|
	\begin{array}{ccc}
		U_1^{k+1} & U_1 & 1\\
		U_2^{k+1} & U_2 & 1\\
		U_3^{k+1} & U_3 & 1
	\end{array}
\right|}.
$$
The rational expression of $C_k$ reads
$$C_k=t^2N_k/D_k \quad \hbox{  with} \quad
N_k=t^{k-2} \quad \hbox{ and} \quad  D_k=D_{k-1}-t^3D_{k-3},$$
with initial conditions $D_1=D_2=D_3=1$.
Note that this expression allows us to compute in a few seconds the
number $c_n $ of culminating walks for $n$ up to $500$.

\smallskip
\noindent{\bf Case $a=2, b=1$.} Here $U_1, U_2, U_3$ are the three roots of the
polynomial $u-t(1+u^3)$. One has $\mu=(k-1,1)$ and $\la=(k-1)^2$,
which gives:
$$
C_k=t \frac{\left|
	\begin{array}{ccc}
		U_1^{k+1} & U_1^2 & 1\\
		U_2^{k+1} & U_2^2 & 1\\
		U_3^{k+1} & U_3^2 & 1
	\end{array}
\right|}
{\left|
	\begin{array}{ccc}
		U_1^{k+1} & U_1^k & 1\\
		U_2^{k+1} & U_2^k  & 1\\
		U_3^{k+1} & U_3^k  & 1
	\end{array}
\right|}
=
t \frac{\left|
	\begin{array}{ccc}
		\bar U_1^{k+1} & \bar U_1^{k-1} & 1\\
		\bar U_2^{k+1} & \bar U_2^{k-1} & 1\\
		\bar U_3^{k+1} & \bar U_3^{k-1} & 1
	\end{array}
\right|}
{\left|
	\begin{array}{ccc}
		\bar U_1^{k+1} & \bar U_1 & 1\\
		\bar U_2^{k+1} & \bar U_2  & 1\\
		\bar U_3^{k+1} & \bar U_3  & 1
	\end{array}
\right|},
$$
where $\bar U_i:=1/U_i$. Note that the series $\bar U_i$ are the roots
of the polynomial $u^2-t(1+u^3)$, which occurs in the (symmetric) case
$a=1,b=2$. It is actually clear from~\eqref{ratform} that the
denominator $D_k$ is unchanged when exchanging $a$ and $b$.

The rational expression of $C_k$ reads
$$C_k=t^2N_k/D_k \quad \hbox{ with } \quad
N_k=tN_{k-2}+t^3N_{k-3} \quad \hbox{  and } \quad  D_k=D_{k-1}-t^3D_{k-3},$$
with initial conditions $N_1=0, N_2=1/t, N_3=t$ and $D_1=D_2=D_3=1$.

\subsection{Proofs}

\subsubsection{Proof of Property~\ref{property:uniqueness}}
\label{sec:property-proof}
Let us say that a path is \emm positive, if every step ends at a positive
level.  For instance, culminating walks are positive. For $n\ge0$
there exists a unique positive walk of length $n$ and height at most
$a+b$,
denoted $w_n$. Indeed, given $h \in \llbracket 0, a+b\rrbracket$,
exactly one
of the values $h+a, h-b$ lies in the interval $\llbracket 1,
a+b\rrbracket$. For the same reason, $w_i$ is a prefix of $w_j$ for
$i\le j$. Let $k \le a+b$,
and assume that there exist two distinct culminating walks of height $k$. These
walks must be $w_i$ and $w_j$, for some $i$ and $j$, with, say, $i<j$.
But then $w_i$ is a prefix of $w_j$, and ends at height $k$, which
prevents $w_j$ from being culminating.
\qed

\subsubsection{Proof of Proposition~\ref{propo-exact}}
\label{sec:main-proof}
The expression of $C_k$ in terms of the adjacency matrix $A_k$ has
been justified in Section~\ref{sec:statement}. Let us now derive the
Schur function  expression of this series. We will give actually two proofs of this
expression: the first one is based on the \emm kernel
method,~\cite{bousquet-petkovsek-recurrences,hexacephale,banderier-these},
and the second one on the \emm  Jacobi-Trudi identity., The first
proof is completely elementary. The second one  allows us to
relate the polynomials $N_k$ and $D_k$ to the
Schur functions $s_\la$ and $s_\mu$.
This derivation is very close to
what was done in ~\cite{mbm-excursions} for excursions confined in a
strip. Some of the results of~\cite{mbm-excursions} will actually be
used to shorten some arguments.

\medskip
\noindent{\bf First proof via the kernel method.}
Consider a culminating walk of height
$k>a$. Such a walk has length at least 2.
Delete its first and last steps: this gives a walk starting from level
$a$,  ending at level $k-a$, and confined  between levels
$1$ and $k-1$. 
Shifting  this walk one step down, we obtain a
non-negative walk starting from level $a-1$ and ending at level
$k-1-a$, of height at most $k-2$. Let $G(t,u)\equiv G(u)$
denote the \gf\ of non-negative walks starting from $a-1$, of height
at most $k-2$. In this series, the variable $t$ keeps track of the
length while the variable $u$ records the final height.
Write $G(u)=\sum_{h=0}^{k-2} u^h G_h$, where $G_h$ counts walks ending
at height
$h$. The above argument implies that the \gf\ of culminating walks of
height $k$ is
\beq\label{CG}
C_k= t^2 G_{k-a-1}.
\eeq
We can construct the walks counted  by $G(u)$ step by step, starting
from height $a-1$, and adding at each time a step $+a$ (unless the
current height is $k-a-1$ or more) or $-b$ (unless the current height
is $b-1$ or less). In terms of \gfs, this gives:
$$
G(u)=u^{a-1} + t (u^a+u^{-b}) G(u)
-tu^{-b} \sum_{h=0}^{b-1} u^h G_h
- tu^a \sum_{h=k-a-1}^{k-2} u^h G_h  ,
$$
that is,
$$
\left( u^b-t (1+u^{a+b})\right) G(u) = u^{a+b-1}
-t \sum_{h=0}^{b-1} u^h G_h
-tu^{a+b} \sum_{h=k-a-1}^{k-2} u^h G_h  .
$$
The \emm kernel, of this equation, that is, the polynomial $u^b-t
(1+u^{a+b})$, has $a+b$ distinct roots, which are Puiseux series in
$t$.  We
denote them $U_1, \ldots, U_{a+b}$. Recall that $G(u)$ is a \emm
polynomial, in $u$ (of degree $k-2$). Replacing $u$ by each of the
$U_i$
gives a system of $a+b$ linear equations relating the unknown series $G_0,
\ldots, G_{b-1}$ and $ G_{k-a-1}, \ldots, G_{k-2}$. For $U=U_i$, with
$1\le i \le a+b$,
$$
 \sum_{h=0}^{b-1} U^h G_h
+ U^{a+b} \sum_{h=k-a-1}^{k-2} U^h G_h = U^{a+b-1}/t.
$$
In matrix form, we have $\M\G=\C/t$,
where $\M$ is the square matrix
of size $a+b$ given by
\beq\label{matrix-M}
\M=\left(
\begin{array}{cccccccccccccccccc}
	U_1^{a+b+k-2}  & U_1^{a+b+k-3} & \cdots & U_1^{b+k-1} &
U_1^{b-1} & U_1^{b-2} & \cdots & 1 \\
 U_2^{a+b+k-2}  &\cdots &&&&& \cdots & 1 \\
\vdots &&&&&&&\vdots \\
 U_{a+b}^{a+b+k-2}  & U_{a+b}^{a+b+k-3} & \cdots & U_{a+b}^{b+k-1} &
U_{a+b}^{b-1} & U_{a+b}^{b-2} & \cdots & 1 \\
\end{array}
\right),
\eeq
$\G$ is the column vector
$(G_{k-2},  \ldots, G_{k-a-1},
G_{b-1}, \ldots, G_0)$,
and $\C$ is the column vector
$(U_1^{a+b-1},\ldots, U_{a+b}^{a+b-1})$.
In view of the definition~\eqref{schur-def} of Schur functions,
$$
\det(\M)= s_\la(\U),
$$
with $\la= (k-1)^a$.
It has been shown in~\cite{mbm-excursions} that the \gf\ of excursions
(walks starting and ending at $0$) confined in the strip of height $k-2$ is
$$
\frac{(-1)^{a+1}}{ t}  \frac{s_{(k-2)^a}(\U)}{s_{(k-1)^a}(\U)},
$$
and that, in particular, $s_\la(\U)\not = 0$. Hence $\M$ is invertible,
and applying Cramer's rule to the above system gives
$$
G_{k-a-1}= \frac 1 t \frac {s_\mu(\U)}{s_\la(\U)},
$$
with $\la$ and $\mu$ defined as in the statement of the
proposition. Combining this with~\eqref{CG} gives the desired Schur function 
form of $C_k$.

\medskip
\noindent{\bf A second proof via symmetric functions.}
Let us now give an alternative proof of the Schur function  expression of
$C_k$. It will be based on the dual Jacobi-Trudi identity, which expresses
Schur functions as a determinant in the elementary symmetric
functions $e_i$~\cite[Cor.~7.16.2]{stanley-vol2}:  for any partition
$\nu$, 
\beq\label{JT}
s_\nu= \det\left( e_{\nu'_j+i-j}\right)_{1\le i,j\le \nu_1},
\eeq
where $\nu'$ is the conjugate partition of $\nu$.

Let us consider the identity~\eqref{ratform}, with $D_k=\det(1-tA_k)$.  It
turns out that this determinant \emm is, of the form~\eqref{JT}.
Indeed, let us define $V_i=-U_i$, for $1\le i \le a+b$. Then the
only elementary symmetric functions of the $V_i$ that do not vanish
are $e_0(\V)=1, e_a(\V)=-1/t$ and $e_{a+b}(\V)=1$ (with $\V=(V_1,
\ldots, V_{a+b})$). Let us apply~\eqref{JT}
to $\nu=\la=(k-1)^a$, with variables $V_1, \ldots, V_{a+b}$. Then
$\nu'=a^{k-1}$ and one obtains
$$
s_\la(\V)= (-t)^{-(k-1)} D_k=(-1)^{a(k-1)}s_\la(\U),
$$
since $s_\la$ is homogeneous of degree $a(k-1)$. This gives the Schur function 
expression of $D_k$.

Now, by the general inversion formula for matrices,
$N_k=(-1)^k \det((1-tA_k)^{k-a,a})$, where $(1-tA_k)^{k-a,a}$ is obtained by
deleting  row $k-a$ and column $a$ from $(1-tA_k)$. Let us apply~\eqref{JT}
to $\nu=\mu=((k-1)^{a-1},a-1)$. Then $\nu'=a^{a-1} (a-1)^{k-a}$.  The
matrix $\left( e_{\nu'_j+i-j}\right)$ has size $k-1$, and its last column contains only
one non-zero entry (equal to $e_0(\V)=1$), in row $k-a$. After
deleting this row and the last column, one obtains:
$$
s_\mu(\V)=(-1)^{a-1} (-t)^{-(k-2)} \det ((1-tA_k)^{k-a,a})=(-1)^{a-1}
t^{-(k-2)}N_k = (-1)^{k(a-1)} s_\mu(\U),
$$
as $s_\mu$ is homogeneous
of degree $k(a-1)$. This gives the desired expression of $N_k$.

\medskip
\noindent{\bf Linear recursions.}
Finally, let us prove that the sequences of polynomials $N_k$ and $D_k$ satisfy
a linear recurrence relation with coefficients in
$\qs[t]$, the ring of polynomials in $t$. Equivalently, we prove that
each of the \gfs\
$$
N(z,t):=\sum_{k\ge a} N_k z^k \quad \hbox{ and }\quad
D(z,t):=\sum_{k\ge  a} D_k z^k
$$
is actually a rational function in $z$ and $t$. The existence of a
linear recursion then easily follows by the general theory of rational
series~\cite[Ch.~4]{stanley-vol1}.

Given  the  expression~\eqref{Dk-Nk-Schur} of $N_k$, what we have to
do is to evaluate
$$
N'(z;u_1, \ldots, u_ {a+b}):=  \sum_{k\ge a}s_{(k-1)^{a-1},a-1} z^k$$
where the symmetric functions involve the $a+b$ indeterminates $u_1,
\ldots, u_ {n}$, with $n=a+b$.  We  use the definition~\eqref{schur-def} of Schur
functions to write $s_{(k-1)^{a-1},a-1}$ as a ratio of determinants of size $n$.
The determinant occurring at the denominator is the Vandermonde $V_n$
in the $u_i$'s, and is independent of
$k$. The determinant at the numerator is obtained from~\eqref{matrix-M} by
replacing the column containing $U_i^{b+k-1}$ by a column of
$U_i^{a+b-1}$ (and then each $U_i$ by the indeterminate $u_i$).  We expand it as a sum
over permutations of length $n$, and obtain:
\begin{eqnarray*}
N'(z;u) &= &
\frac 1 {V_n} \sum_{k\ge a} z^k \sum_{\si \in \Sn_{n}} \varepsilon(\si) \
\si \left(
u_1^{n+k-2}\cdots u_{a-1}^{b+k}u_a^{a+b-1}u_{a+1}^{b-1}\cdots
u_{n-1}^{1}u_n^0
\right)
\\
&=& \frac 1 {V_n} \sum_{\si \in \Sn_n} \varepsilon(\si) \ \si\left(
\frac{u_1 ^{n+a-2} \cdots u_{a-1}^{a+b} u_a^{a+b-1} u_{a+1}^{b-1} \cdots u_{n-1}^1 u_n^0}
{1-zu_1\cdots u_{a-1}}
\right),\nonumber
\end{eqnarray*}
where $\si$ acts on functions of $u_1, \ldots, u_n$ by permuting the
variables:
$$
\si F(u_1, \ldots, u_n)= F(u_{\si(1)}, \ldots, u_{\si(n)}).
$$
Equivalently,
$$
N'(z;u)
= \frac{P(z;u)} {Q(z;u) }
$$
where
$$
Q(z;u)=\sum_{I\subset \llbracket n\rrbracket,\
	|I|=a-1}\left(1-z\prod_{i\in I}u_i\right)
$$
and $P(z;u)$ is another polynomial in 
$z$
and the $u_i$, symmetric in
the $u_i$'s. This symmetry property shows that replacing $u_i$ by
$U_i$ transforms $N'(z;u)$
 into a   rational series in $z$ and $t$. 
The link between $N_k$ and $s_{(k-1)^{a-1},a-1}$ then gives
$$
N(z,t)
=\frac{(-1)^{a-1} P((-1)^{a-1}tz; \, \U)}{t^2\,  Q((-1)^{a-1}tz; \, \U)},$$
another rational function of $z$ and $t$.
A similar argument, given explicitly in~\cite{mbm-excursions}, yields
$$
D(z,t)= \frac{(-1)^{a-1}\tilde P((-1)^{a-1}tz; \, \U)}{t\, \tilde Q((-1)^{a-1}tz; \, \U)},$$
for two polynomials $\tilde P$ and $\tilde Q$ in $z$ and $u_1, \ldots,
u_n$. More precisely,
$$
\tilde Q(z;u)=\sum_{I\subset \llbracket n\rrbracket,\
	|I|=a}\left(1-z\prod_{i\in I}u_i\right).
$$
By looking at the degree of $\tilde Q$ and $Q$, this establishes the existence of recurrence relations of order  ${a+b}\choose a-1$ for $N_k$, and ${a+b}\choose a$ for
$D_k$. If there were recursions of a smaller order, the polynomials
		$Q(z;\, \U)$ or $\tilde Q(z;\, \U)$ would factor. It has been
shown in~\cite[Section~6]{mbm-excursions} that  $\tilde Q(z;\, \U)$ is
irreducible, and the    same argument implies that $Q(z;\, \U)$ is
irreducible as well.
\qed

\subsubsection{Two proofs of Corollary~\ref{coro:a=1}}
\label{sec:proof-coro}
Let us specialize Proposition~\ref{propo-exact} to the case $a=1$. We observe
that $\mu$ is the empty partition, so that $s_\mu=1$, while
$\la=(k-1)$, so that $s_\la=h_{k-1}$. The expressions~\eqref{Dk-Nk-Schur} of $N_k$ and
$D_k$ in terms of Schur functions give $N_k=t^{k-2}$ and $D_k=t^{k-1}
h_{k-1}(\U)$. Observe that $e_1(\U)=1/t$ and $e_{b+1}(U)=(-1)^{b+1}$.
The classical relation between elementary and complete
symmetric functions~\cite[Eq.~(7.13)]{stanley-vol2} gives, for $k\ge 1$,
$$
h_k(\U)=\frac 1 t \, h_{k-1}(\U)- h_{k-b-1}(\U),
$$
with initial conditions $h_0=1$ and $h_i=0$ for $i<0$. This gives the
desired recursion for $D_k$.

\medskip
Let us now justify combinatorially the simplicity of $N_k$ and $D_k$.
Recall that, for $k\ge 2$, one has $C_k=t^2W_k$, where $W_k$
counts walks (with steps +1, $-b$) going from 1 to $k-1$ on the
segment graph $\llbracket 1, k-1 \rrbracket$. The adjacency matrix of
this graph is $A_k$. The combinatorial description\footnote{This
description seems to have been around since, at least, the
80's~\cite{foata,viennot-heaps}. See~\cite[Thm.~2.1]{mbm-icm} for a
modern formulation.}
of the inverse of the matrix $(1-tA_k)$ tells us that $D_k$ counts
non-intersecting collections of elementary cycles on the segment $\llbracket
1, k-1 \rrbracket$, while $N_k$ counts configurations formed of a
self-avoiding path $w$ going from 1 to $k-1$ together with a
non-intersecting collection of
elementary cycles that do not meet $w$. In the polynomials $N_k$ and
$D_k$, each cycle of length $\ell$ is given a weight $(-t^\ell)$ while
the path $w$ is simply weighted $t^\ell$ if it has length $\ell$.
This gives directly
$N_k=t^{k-2}$, as the only possible path $w$ is formed of $k-2$ up
steps, and leaves no place to co-existing cycles. Now the only
elementary cycles are formed of $b$ up steps and one down step $-b$. The
recursion satisfied by $D_k$ is then obtained by discussing whether
the point $k-1$ is contained in one such cycle.

Note that this proof can be rephrased in terms of \emm heaps of
cycles, using Viennot's correspondence
between walks on a graph and certain heaps~\cite{viennot-heaps}.
 The expression $N_k/D_k$ then appears as a specialisation of the \emm
 inversion lemma,  (also found in~\cite{viennot-heaps}).
In particular,  $D_k$  is the (alternating) \gf\ of \emm trivial,
heaps of cycles.
\qed

\medskip
\noindent{\bf Remark.} For general values of $a$ and $b$, the
description of $D_k$ and $N_k$ in terms of cycles and paths on the
graph $\llbracket 1, k-1 \rrbracket$ remains perfectly valid. But the
structure of elementary cycles and self-avoiding paths becomes more
complicated. See an example in Figure~\ref{fig:cycles}.

\begin{figure}[thb]
\begin{center}
	\scalebox{1}{\input{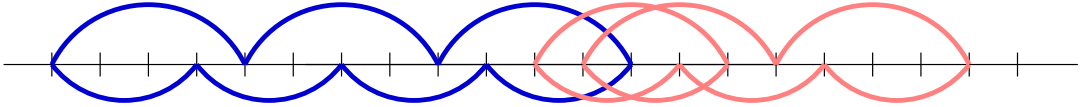}}
\end{center}
\caption{Two non-intersecting elementary cycles  (for $a=4$ and  $b=3$).}
\label{fig:cycles}
\end{figure}

\subsubsection{Proof of Proposition~\ref{propo-exact11}}
The expression of $C_k$ is just a specialisation of
Corollary~\ref{coro:a=1} to the case $b=1$. It remains to prove that the series $C(t)$
is not D-finite.

Let us first observe that $C(t)$ is D-finite if and only if the power
series (in $u$) $B(u):=\sum_k u^{k}/(1-u^{2k})$ is D-finite. Indeed,
one goes from $C(t)$ to $B(u)$, and vice-versa, by an algebraic
substitution of the variable, as $U$ is an algebraic function of $t$
and $t=U/(1+U^2)$. It is known that D-finite series are preserved by
algebraic substitutions~\cite[Thm.~6.4.10]{stanley-vol2}, so that we
can now focus on the series $B(u)$.

This series has integer coefficients, and radius of convergence
1. Hence it is either rational, or admits the unit circle as a natural
boundary~\cite{carlson}.  As will be recalled later~\eqref{S-asympt}, the singular
behaviour of $B(u)$ as $u$ approaches 1 involves a logarithm, which
rules out the possibility of $B(u)$ being rational. Thus $B(u)$ has a
natural boundary, and, in particular, infinitely many
singularities. But D-finite series have only finitely many
singularities, so that $B(u)$ is not D-finite.
\qed

\section{Asymptotic enumerative results} \label{sec:asympt}

In this section we present some results on the asymptotic enumeration
of culminating walks.
Intuitively,  three cases arise, depending on the \emm drift, of the
walks, defined as the difference   $a-b$. Indeed, an $n$-step random
walk of positive drift is known to end at level $O(n)$ and is,
intuitively, quite likely to be culminating.  On the contrary, walks
with a  negative drift have a very small probability of staying
positive. We first  work out the intermediate   case of a zero drift.

\subsection{Walks with a null drift ($a=b=1$)}
\label{section-exact}

When the drift is zero, the number of \emm positive, walks (walks in
which every step ends at a positive level)  of length $n$ is known to
be asymptotically equivalent to $2^{n}/\sqrt{2\pi n}$. The average height, and the
average final level of these walks both scale like $\sqrt n$. Hence we
can expect the number of culminating walks to be of the order of
$2^n/n$. This is confirmed by the following result.
\begin{proposition}
 \label{propo-asympt11}
As $n\rightarrow \infty$, the number of $(1,1)$-culminating paths of length
$n$ is asymptotically equivalent to $2^n/(4n)$.
\end{proposition}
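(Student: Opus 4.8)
The plan is to carry out singularity analysis on the generating function $C(t)$ of $(1,1)$-culminating walks, which Proposition~\ref{propo-exact11} supplies in closed form:
\[
C(t)=\frac{1-U^2}{1+U^2}\,S(U),\qquad S(U):=\sum_{k\ge1}\frac{U^k}{1-U^{2k}},
\]
where $U=U_1(t)=\bigl(1-\sqrt{1-4t^2}\bigr)/(2t)$ is the branch of $t=U/(1+U^2)$ with $U(0)=0$. The function $U_1$ is analytic on $|t|<1/2$ with branch points only at $t=\pm 1/2$, and $U_1\to1$ as $t\to\tfrac12^-$. Since $S(u)$ is analytic for $|u|<1$ and $|U_1(t)|<1$ on a slit neighbourhood of $t=1/2$ — indeed the locus $|U_1|=1$ near $1/2$ is precisely the real ray $[1/2,\infty)$, because $U_1\bar U_1=U_1U_2=1$ forces $U_2=\bar U_1$, hence $1/t=U_1+U_2$ is real, hence $|t|>1/2$ — the composite $C(t)$ is analytic in a $\Delta$-domain at $t=1/2$. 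First I would establish this analyticity and locate $t=1/2$ as the dominant singularity.

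The core computation is the singular behaviour of the Lambert-type series $S(U)$ as $U\to1^-$. Writing $U=e^{-s}$ one has $S(e^{-s})=\sum_{k\ge1}\bigl(2\sinh(ks)\bigr)^{-1}=\sum_{m\ge1}d_{\mathrm{odd}}(m)e^{-ms}$, where $d_{\mathrm{odd}}(m)$ is the number of odd divisors of $m$, with Dirichlet series $\sum_m d_{\mathrm{odd}}(m)m^{-w}=(1-2^{-w})\zeta(w)^2$. A Mellin-transform analysis of this harmonic sum — the transform $\Gamma(w)(1-2^{-w})\zeta(w)^2$ has a double pole at $w=1$ with local part $\tfrac12(w-1)^{-2}+O\bigl((w-1)^{-1}\bigr)$ — yields $S(e^{-s})\sim -\tfrac12\,s^{-1}\ln s$ as $s\to0^+$, hence, since $s=-\ln U\sim 1-U$,
\[
S(U)\sim\frac{-\ln(1-U)}{2(1-U)}\qquad(U\to1^-).
\]
This is the estimate the paper records as \eqref{S-asympt}. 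Combining with $\dfrac{1-U^2}{1+U^2}\sim 1-U$ gives $C(t)\sim -\tfrac12\ln(1-U)$; inverting $t=U/(1+U^2)$ near $t=1/2$ gives $1-U\sim\sqrt2\,\sqrt{1-2t}$, so $\ln(1-U)\sim\tfrac12\ln(1-2t)$, and finally
\[
C(t)\;\sim\;-\tfrac14\,\ln(1-2t)\qquad\bigl(t\to\tfrac12^-\bigr).
\]

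It remains to check that the other point of modulus $1/2$, namely $t=-1/2$ (where $U_1\to-1$), is subdominant. Splitting $S(U_1)$ according to the parity of $k$ shows that, with $v=-U_1\to1^-$, the even- and odd-indexed parts each carry the same leading term $\tfrac{-\ln(1-v)}{4(1-v)}$ (their Dirichlet series are $(1-2^{-w})\zeta(w)^2$ evaluated at $v^2$, respectively $(1-2^{-w})^2\zeta(w)^2$, both with double pole at $w=1$ of the same leading weight), so these cancel and $S(U_1)=O\bigl((1-v)^{-1}\bigr)$; since the prefactor $\tfrac{1-U_1^2}{1+U_1^2}$ vanishes like $1-v$, $C$ is in fact bounded near $t=-1/2$, and that singularity contributes $o(2^n/n)$. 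Applying the transfer theorem of singularity analysis on a $\Delta$-domain at $t=1/2$ with a second small excision around $t=-1/2$, and recalling $[t^n]\bigl(-\ln(1-2t)\bigr)=2^n/n$, I get
\[
[t^n]C(t)\;\sim\;\tfrac14\cdot\frac{2^n}{n}\;=\;\frac{2^n}{4n},
\]
as claimed. The main obstacle is the singular analysis of $S(U)$ at $U=1$ — the Mellin/residue computation and the careful transport of the logarithmic singularity through the algebraic substitution $U\leftrightarrow t$ — together with the verification that $t=-1/2$ is subdominant, which is non-automatic because $S$ has the unit circle as a natural boundary and admits no analytic continuation past it.
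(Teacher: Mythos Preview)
Your proof is correct and follows essentially the same strategy as the paper: singularity analysis of $C(t)$, with the key input being the Mellin-transform asymptotics of the Lambert-type series $\sum_k U^k/(1-U^{2k})$ near $U=1$.

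The one genuine difference is how the two dominant singularities $t=\pm 1/2$ are handled. The paper exploits the oddness of $U(t)$ to split $C(t)$ into even and odd parts and substitutes $x=t^2$, $Z=U^2$; this collapses both singularities to a single point $x=1/4$, so only one local analysis (at $Z\to 1$) is needed. You instead work directly in $t$ and carry out a second Mellin computation to show that the leading logarithmic contributions of the even- and odd-indexed subsums cancel at $U\to -1$, leaving $S(U_1)=O((1-v)^{-1})$ and hence $C$ bounded near $t=-1/2$. Your Dirichlet-series bookkeeping is correct: $(1-2^{-w})(2^{1-w}-1)\zeta(w)^2$ has only a simple pole at $w=1$, so the cancellation is genuine. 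The paper's parity trick is a bit slicker---it avoids this extra computation entirely---but your direct route is perfectly valid and makes the subdominance at $-1/2$ explicit. One small caveat: ``bounded'' is slightly imprecise, since the algebraic substitution $1-v\sim\sqrt{2}\sqrt{1+2t}$ still leaves a square-root singularity at $t=-1/2$; but that contributes $O(2^n n^{-3/2})=o(2^n/n)$, so your conclusion stands.
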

\begin{proof}
We start from the expression~\eqref{C11-expr} of $C(t)$, with
$U=U_1=O(t)$, and apply
the \emm singularity analysis, of~\cite{flajolet-odlyzko}.
Note that $U(t)$ is an odd function of $t$. Let us first
study the even part of $C(t)$, which counts culminating paths of even
length:
$$
C_e(t)=  \frac {1-U^2}{1+U^2}\sum_{k\ge 1} \frac{U^{2k}}{1-U^{4k}}.
$$
Let $Z\equiv Z(x)$ be such that $U(t)^2=Z(t^2)$. That is,
$$
 Z\equiv Z(x)= \frac{1-2x-\sqrt{1-4x}}{2x}.
$$
The equation $U=t(1+U^2)$ gives $Z=x(1+Z)^2$. Moreover, we have
$C_e(t) = D(t^2)$ where
$$
D(x)=  \frac {1-Z}{1+Z}\sum_{k\ge 1} \frac{Z^{k}}{1-Z^{2k}}.
$$
We  thus need  to study the asymptotic behaviour of the
coefficients of $D(x)$. We write
$$
D(x)=S(Z(x)), \quad \hbox{with}\quad
S(z)= \frac{1-z}{1+z} \sum_{k\ge 1} \frac {z^k}{1-z^{2k}}.
$$

The series  $ Z(x)$ has radius of convergence $1/4$. It is
analytic in the domain $\D=\cs\setminus[1/4, +\infty)$, with exactly
one singularity, at $x= 1/4$. One has $Z(0)=0$, and $|Z(x)|<1$ for all
$x$ in $\D$. Indeed, assume $|Z(x)|\ge 1$ for some $x$ in $\D$. By
continuity,   $Z(x)=e^{i\theta}$
for some $x$ in $\D$. From the equation $x(1+Z)^2=Z$, we conclude that
$\theta\in (-\pi, \pi)$ 
(for $\theta=\pm \pi$, we would have $Z=-1=0$), 
and that $x=1/(4\cos^2(\theta/2))$. But
this contradicts the fact that $x\in\D$.

 The series $S(z)$ has radius of convergence $1$.
Given that $|Z(x)|<1$ in $\D$, this implies that $D(x)=S(Z(x)))$ is analytic in
the domain $\D$.
It remains to understand how $D(x)$ behaves as $x$ approaches
$1/4$ in $\D$.

Take $x=(1-re^{i\theta})/4$, with $0<r<1$ and $|\theta|<\pi$. Then
$$
Z(x) =  1 - 2 \sqrt{1-4x} + O( 1-4x) = 1- 2\sqrt r e^{i\theta/2} + O(r).
$$
In particular,
$$
\arg(1-Z(x))= \theta/2 +O(\sqrt r).
$$
Choose $\alpha  \in (\pi/4, \pi/2)$.  The above identity shows that there
exists $\eta>0 $ and $\pi/2 <\phi <\pi$ such that, in the indented disk
$$
\I=\{x: \, |1-4x|<\eta \hbox{ and } |\arg(1-4x)|<\phi\},
$$
one has
\beq\label{argZ}
|\arg(1-Z(x))|<\alpha.
\eeq
Now when $z\rightarrow 1$ in such a way that $|\arg(1-z)|<\alpha $,
\beq\label{S-asympt}
\sum_{k\ge 1} \frac {z^k}{1-z^{2k}} \sim \frac 1 {2(1-z)} \log \frac 1
		{1-z}, \quad \hbox{so that} \quad S(z)  \sim \frac 1 4 \log \frac
		1 {1-z}.
\eeq
This can be obtained using a Mellin transform
or some already known results on the \gf\ of divisor
sums~\cite{flajolet-gourdon-dumas}.

Combining~\eqref{argZ} and~\eqref{S-asympt} shows that, as $x$ tends
to $1/4$ in the indented disk $\I$,
\beq\label{D-asympt}
D(x)=S(Z(x))\sim\frac 1 8 \log \frac 1 {1-4x}.
\eeq
This allows us to apply the transfer theorems
of~\cite{flajolet-odlyzko}. Indeed, the series $D(x)$ is analytic in
the following 
domain:
$$
\Delta = \{x \not= 1/4: |4x|< 1+ \eta \ \hbox{ and } \ |\arg(1-4x)|<\phi\},
$$
with  singular behaviour near $x=1/4$ given by~\eqref{D-asympt}. From this we conclude that the
coefficient of $x^n$ in $D(x)$ is asymptotically equivalent to $4^n/(8n)$. Going back to the series $C_e(t)$, this
means that the number of culminating paths of (even) length $N=2n$ is asymptotically equivalent to $2^N/(4N)$.

The study of the odd part of $C(t)$ is similar.
 \end{proof}
\noindent

\subsection{Walks with positive drift ($a>b$)}
\label{sec:positive}

When the drift is positive, it is known that, asymptotically, a
positive fraction of walks with steps $+a$, $-b$ is actually
\emm positive, (every step ends at a positive level).
More precisely,
as $n \rightarrow \infty$,  the number $p_n^{a,b}$ of positive walks of
length $n$ satisfies
\beq\label{positive-asympt}
 p_n^{a,b}\sim \kappa_{a,b} . 2^n
\eeq
for some  positive constant $\kappa_{a,b}$.
We will show that the culmination and final record conditions  play similar
 filtering roles in  the paths of $\{m,\overline{m}\}^*$, and prove
 the following result.
\begin{proposition}\label{conj:agtb}
For $a>b$,
 the number  $c^{a,b}_n $ of culminating walks of length $n$ satisfies
		$$
c^{a,b}_n = \kappa_{a,b}^2. 2^n + O(\rho^n),
$$
where  $\rho <2$ and $\kappa_{a,b}$ is the constant involved in the
asymptotics of positive walks.
\end{proposition}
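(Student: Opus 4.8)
The plan is to relate culminating walks of length $n$ to ordered pairs of positive walks via a factorisation at the (last) record of the walk, matched against a suitable reversal of a positive walk; the asymptotic count $\kappa_{a,b}^2 \cdot 2^n$ then arises as a product of the two asymptotics~\eqref{positive-asympt}, with the error term coming from the ``interface'' between the two halves. Concretely, a culminating walk of height $k$ can be split at its last passage at height $k-a$ before the final $+a$ step (or, more robustly, one studies the structure near the top): deleting the last step, one gets a positive walk ending at level $k-a$ whose maximum is $k-1$; reading the final portion of this walk from the top downwards should again look like (a prefix of) a positive walk. I would make this precise by a generating-function argument rather than a pure bijection.

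First I would set up generating functions. Let $P(t,u)=\sum p^{a,b}_{n,h} t^n u^h$ be the length-and-final-height generating function of positive walks; from~\cite{BaFl02} one knows $P(t,u)$ is algebraic, that $p^{a,b}_n=[t^n]P(t,1)$ behaves as $\kappa_{a,b}2^n$, and crucially that $P(t,1)$ has a unique dominant singularity at $t=1/2$ of square-root type, all other singularities being of strictly larger modulus. The culminating-walk generating function $C(t)=\sum_k C_k(t)$ can be re-derived combinatorially (not via the Schur-function formula of Proposition~\ref{propo-exact}, which seems ill-suited to asymptotics): splitting a culminating walk of height $k$ as $w' \cdot (+a) \cdot$ (something) or, better, writing it in terms of two positive-walk factors glued along the ``ceiling'' $k$. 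The key identity I expect to obtain has the shape
\[
C(t)=\sum_{k}\ \bigl(\text{positive walk reaching height }k-1\text{, ending at }k-a\bigr)\cdot t^2,
\]
and the reaching-a-given-height constraint is exactly what, after summation over $k$, produces a \emph{product} structure. The clean way to see the product: a positive walk of height exactly $h$ decomposes uniquely at its first visit to its maximum $h$ into a positive walk of height $\le h$ ending at $h$, followed by a walk that stays $\le h$; reversing steps, the ``final record'' condition of a culminating walk says precisely that its reverse, shifted, is a positive walk, so $C_k$ is essentially a diagonal/Hadamard-type extraction from $P(t,u)\times P(t,1/u)$ (or a contour-integral $\oint P(t,u)P(t,u^{-1})\,\frac{du}{u}$ type expression), summed over $k$.

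Then I would do singularity analysis. Having written $C(t)$ (or its even/odd parts) as an algebraic function built from $P(t,1)$ and related algebraic series, I would argue that: (i) the sum over $k$ converges and defines a function analytic in a disk slightly larger than $|t|<1/2$ except at $t=1/2$; (ii) near $t=1/2$ the dominant contribution comes from the product of the two square-root singularities of the two $P$-factors, which combine to give a simple pole with residue $\kappa_{a,b}^2$ — that is, $C(t)\sim \frac{\kappa_{a,b}^2/2}{1-2t}$ as $t\to 1/2$ — because the square-root terms $\sqrt{1-2t}$ from each factor multiply to give $(1-2t)$ in the denominator after the appropriate extraction; (iii) $C(t)-\frac{\kappa_{a,b}^2/2}{1-2t}$ extends analytically to $|t|<1/\rho$ for some $\rho<2$. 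Transfer theorems (or just partial fractions, since the correction is meromorphic with next singularity of modulus $>1/2$ — this uses that $P(t,1)$'s subdominant singularities are strictly larger, and that the sum over $k$ does not create an accumulation of singularities on $|t|=1/2$) then give $c^{a,b}_n=\kappa_{a,b}^2 2^n+O(\rho^n)$.

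The main obstacle I anticipate is step (ii)–(iii): controlling the sum over $k$ and proving that the only singularity on the circle $|t|=1/2$ is the expected simple pole at $t=1/2$, with no accumulation from the infinitely many terms $C_k$, and that the remainder is genuinely analytic past $|t|=1/2$. The positive-drift hypothesis $a>b$ enters here — it guarantees that a typical positive walk has height and endpoint of order $n$, so that the height-$k$ pieces decay geometrically in $k$ uniformly for $t$ near $1/2$, which is what makes the interchange of summation and singularity analysis legitimate and what forces the subdominant growth rate $\rho<2$. I would handle this by bounding $C_k(t)$ for $|t|\le 1/2+\varepsilon$ by the corresponding bound on positive walks of height $\ge k$, using the explicit algebraicity and dominant-singularity structure from~\cite{BaFl02}, and then invoking that a locally uniformly convergent sum of analytic functions is analytic. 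The odd part of $C(t)$ (when $a+b$ is odd, $U$ and the relevant series are not even in $t$) is treated identically by splitting into residue classes of $n$, exactly as in the proof of Proposition~\ref{propo-asympt11}.
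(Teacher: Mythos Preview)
Your approach is genuinely different from the paper's, and considerably more complicated; moreover it contains a factual error that would derail the singularity analysis as you have sketched it.

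The paper's proof is entirely elementary and avoids generating functions altogether. It splits a culminating word $u$ of length $n$ \emph{at its midpoint}: writing $u=v\,\overleftarrow{w}$ with $|v|=\lfloor n/2\rfloor$ and $|w|=\lceil n/2\rceil$, both $v$ and $w$ are positive walks, giving the upper bound $c_n\le p_{\lfloor n/2\rfloor}p_{\lceil n/2\rceil}$. For the lower bound, one counts the pairs $(v,w)$ of positive walks for which $u=v\,\overleftarrow{w}$ fails to be culminating: any such $u$ must contain a \emph{quasi-excursion} (a walk whose last step is the first to reach a non-positive level) of length at least $\lfloor n/2\rfloor$, either as a prefix or as a reversed suffix. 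Since quasi-excursions are exponentially rare when $a>b$ (their count is $O(\mu^n)$ for some $\mu<2$, by~\cite{BaFl02}), the defect $p_{\lfloor n/2\rfloor}p_{\lceil n/2\rceil}-c_n$ is bounded by $2\sum_{i\ge\lfloor n/2\rfloor}e_i\,2^{n-i}=O((2\mu)^{n/2})$. The result follows immediately from $p_n\sim\kappa_{a,b}2^n$. No summation over heights, no analysis of $C(t)$, no control of accumulation of singularities.

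Your generating-function route has a concrete problem: you assert that $P(t,1)$ has a square-root singularity at $t=1/2$, but this is the behaviour for $a=b$. When $a>b$ the asymptotic $p_n\sim\kappa_{a,b}2^n$ forces a \emph{simple pole} at $t=1/2$ (as stated explicitly in the paper, $\nu=1$ in that regime). With two simple-pole factors, a naive product would yield a double pole and hence $c_n\sim Cn\,2^n$, which is wrong; so the mechanism by which the ``product of the two $P$-factors'' produces a simple pole with residue $\kappa_{a,b}^2$ is not the one you describe, and would need a genuinely different argument. The height-summed contour-integral expression you gesture at is not obviously equal to $C(t)$ either, and making it precise would require exactly the kind of delicate uniform-in-$k$ control you flag as the main obstacle. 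All of this is bypassed by the paper's two-line counting argument.
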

\begin{proof}
In what follows, we consider two families of paths that are close to the
meanders and excursions defined in the introduction: the (already
defined) \emm positive, walks, and certain \emm quasi-excursions,.
The exact and  asymptotic enumeration of meanders and excursions
has been  completely   worked out in~\cite{BaFl02}, and we will
 rely heavily on this paper. For instance, the
 estimate~\eqref{positive-asympt} follows from  the results
 of~\cite{BaFl02}  by noticing that a meander factors into an
 excursion followed by a positive walk. Let us call \emm
 quasi-excursion, a walk in which every step, except the final one,
 ends at a positive level. For instance, if $a=3$ and $b=2$, the word
 $m\overline m \overline m$ is a quasi-excursion. By removing the last
 step of such a walk, we see that quasi-excursions are in bijection
 with positive walks  of final height 1, 2, \ldots, or $b$. We  denote
 the number of quasi-excursions of length $n$ by $e_n^{a,b}$. Using
 the results of~\cite{BaFl02}, it is easy to see that, when the drift
 is positive, quasi-excursions are exponentially rare among general
 walks. That is, there exists $\mu<2$ such that for $n$ large enough,
\beq\label{quasi-exc}
e_n^{a,b} < \mu^n.
\eeq

From now on,  we drop the superscripts $a$ and $b$, writing for
instance $c_n$ rather than $c_n^{a,b}$. For any word $w=w_1\cdots
w_k$, denote by $\mir w$ the mirror image of $w$, that is, $\mir w=w_k
\cdots w_1$. Let $u$ be a culminating word of length $n$, and write
$u=vw$, where the word $v$ (resp.~$w$) has length $\lfloor n/2\rfloor$
(resp.~$\lceil n/2\rceil$). Then both $v$ and $\mir w$ are positive
walks, and this proves that
\beq\label{ineq0}
c_n \le p_{\lfloor n/2\rfloor} p_{\lceil n/2\rceil}.
\eeq
Conversely, let us  bound the number of pairs $(v,w)$, where $v$
and $w$ are positive walks of respective lengths  $\lfloor
n/2\rfloor$ and $\lceil n/2\rceil$, such that the word $u=v\mir w$ is
\emm not, culminating. This means that
\begin{itemize}
\item either $u$ factors as $v_1w_1$, where $v_1$ is a
	quasi-excursion of length $i> \lfloor n/2\rfloor$,
\item or, symmetrically, $u$ factors as $v_2\mir w_2$ where  $w_2$ is
	a quasi-excursion of length $j>\lceil n/2\rceil$.
\end{itemize}
This implies that
$$
 p_{\lfloor n/2\rfloor} p_{\lceil n/2\rceil}-c_n \le 2\sum _{i=\lfloor
 n/2\rfloor} ^n e_i 2^{n-i}.
$$
In view of~\eqref{quasi-exc}, we have, for $n$ large enough:
$$
 p_{\lfloor n/2\rfloor} p_{\lceil n/2\rceil}-c_n \le 2\sum _{i=\lfloor
 n/2\rfloor} ^n \mu^i 2^{n-i} \le \frac 2{1-\mu/2}\
 2^n (\mu/2)^{\lfloor n/2\rfloor}\le \frac
 4{1-\mu/2}\ (2 \mu)^{\lfloor n/2\rfloor}.
$$
Combining this with~\eqref{ineq0} and the known asymptotics for the
numbers $p_n$ gives the expected result.
\end{proof}

\subsection{Walks with negative drift ($a<b$):  exponential decay}
When the drift is negative, it is known that positive walks are
exponentially rare among  general walks. Indeed, there exist
constants $\kappa_{a,b}>0$ and $\alpha_{a,b}\in (1,2)$, such that
$$
 p_n^{a,b} \sim \kappa_{a,b}\frac{\alpha_{a,b}^n}{n^{3/2}}.
$$
More precisely,
\beq\label{alpha-def}
 \alpha_{a,b} =
\frac{a+b}{\sqrt[a+b]{a^ab^b}}=\frac{1+q}{\sqrt[1+q]{q^q}} \equiv
\alpha(q),
\eeq
where $q=a/b<1$.
We show below that the constant $\alpha_{a,b}$ also governs the number
of culminating walks of size $n$.
\begin{proposition}\label{negative-drift}
 For $a<b$,
 the number  $c^{a,b}_n $ of culminating walks of length $n$ satisfies
\beq\label{upper}
c_{n}^{a,b}
= O\left( \frac{\alpha_{a,b}^n}{n^{3}}\right),
\eeq
where $\alpha_{a,b}$ is given above. Moreover,
$$
\lim_{n\rightarrow \infty  } \left(c_{n}^{a,b}\right) ^{1/n}=
\alpha_{a,b}.
$$
\end{proposition}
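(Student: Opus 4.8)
\noindent\emph{Proof idea.}\quad I would split the argument into the upper bound~\eqref{upper} and the evaluation of the exponential growth rate. The bound~\eqref{upper} follows from the reasoning already used for Proposition~\ref{conj:agtb}: inequality~\eqref{ineq0}, namely $c_n^{a,b}\le p_{\lfloor n/2\rfloor}^{a,b}\,p_{\lceil n/2\rceil}^{a,b}$, was obtained by cutting a culminating word $u$ of length $n$ into $u=v\mir w$ with $v$ and $w$ positive walks, and that derivation used only the positivity and final-record conditions, not the sign of $a-b$. Plugging in the asymptotics $p_m^{a,b}\sim\kappa_{a,b}\,\alpha_{a,b}^m/m^{3/2}$ recalled just above the statement gives $c_n^{a,b}=O(\alpha_{a,b}^n/n^3)$, which is~\eqref{upper}, and in particular $\limsup_n(c_n^{a,b})^{1/n}\le\alpha_{a,b}$.

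For the matching inequality $\liminf_n(c_n^{a,b})^{1/n}\ge\alpha_{a,b}$ I would exploit the confinement picture behind~\eqref{ratform}. For $k>a$, the culminating walks of height $k$ are exactly the words $m\,v\,m$ with $v$ a walk (steps $+a,-b$) from $a$ to $k-a$ staying in $\llbracket 1,k-1\rrbracket$; the number of such $v$ of length $\ell$ is $(A_k^\ell)_{a,k-a}$, with $A_k$ as in~\eqref{adjacency}. Fix $k$ large enough that the segment graph on $\llbracket 1,k-1\rrbracket$ has a cycle through the vertex $a$ (for instance $k>a(1+b)$: go up from $a$ by $b$ steps and back down by $a$ steps). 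Then $a$ sits in a strongly connected component, with Perron eigenvalue $\lambda_k$, and is joined to $k-a$ inside the strip by a fixed walk of some length $\ell_0=\ell_0(k)$. Concatenating an arbitrary closed walk at $a$ with that fixed walk gives $(A_k^\ell)_{a,k-a}\ge (A_k^{\ell-\ell_0})_{a,a}\ge c(k)\,\lambda_k^{\,\ell-\ell_0}$ for all large $\ell$ in the unique residue class modulo $a+b$ along which these entries are nonzero; hence $c_n^{a,b}\ge c(k)\,\lambda_k^{\,n-2-\ell_0}$ for $n$ in the corresponding residue class, so the exponential growth of $(c_n^{a,b})$ along that class is at least $\lambda_k$.

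It then remains to check that $\lambda_k\uparrow\alpha_{a,b}$ as $k\to\infty$. The sequence is non-decreasing, since widening the strip only adds closed walks at $a$; it is bounded above by $\alpha_{a,b}$, because closed walks at $a$ inside the strip are a subset of the unconstrained closed walks at $a$ on $\zs$, of which there are $\binom{m(a+b)}{ma}$ of length $m(a+b)$, a quantity of exponential growth $\alpha_{a,b}$ by Stirling's formula and~\eqref{alpha-def}; and $\sup_k\lambda_k=\alpha_{a,b}$, because as $k\to\infty$ the strip counts $(A_k^\ell)_{a,a}$ increase to the number of closed walks at $a$ with steps $+a,-b$ staying $\ge 1$, whose exponential growth is again $\alpha_{a,b}$ (up to a bounded shift these are excursions for the step set $\{+a,-b\}$, whose growth rate is $\alpha_{a,b}$ by~\cite{BaFl02} and~\eqref{alpha-def}). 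Since every residue class modulo $a+b$ admits arbitrarily large admissible $k$ (one only prescribes $k$ modulo $a+b$), the previous paragraph yields $\liminf_n(c_n^{a,b})^{1/n}\ge\lambda_k$ with $\lambda_k$ as close to $\alpha_{a,b}$ as desired, and together with the upper bound this gives $\lim_n(c_n^{a,b})^{1/n}=\alpha_{a,b}$.

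The main obstacle is precisely this last convergence together with the periodicity bookkeeping: one has to verify carefully that, for $k$ large, the vertices $a$ and $k-a$ do lie in a common strongly connected component of the segment graph, that the nonvanishing residue class of $(A_k^\ell)_{a,k-a}$ modulo $a+b$ can be matched by choosing $k$ modulo $a+b$, and that closed walks staying $\ge 1$ share the exponential growth rate of excursions. All of this is routine transfer-matrix and Perron--Frobenius analysis. A seemingly more elementary alternative---realising culminating walks directly as $m\,f\,m^{\lceil M/a\rceil}$, with $f$ a positive walk from $a$ to $a$ and $M$ its height---turns out to require delicate control of the distribution of $M$ in order to reach every length $n$, and so is not actually shorter.
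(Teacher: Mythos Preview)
Your upper bound is exactly the paper's: inequality~\eqref{ineq0} together with $p_m^{a,b}\sim\kappa_{a,b}\alpha_{a,b}^m/m^{3/2}$.

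For the lower bound, your transfer-matrix/Perron--Frobenius route is correct and genuinely different from the paper's. The paper argues constructively: take an excursion of length $k=(a+b)\lfloor(n-1-\sqrt n)/(a+b)\rfloor$ and height less than $\sqrt n$, prepend one up step, and append $n-k-1$ up steps; this yields a culminating walk of length $n$, so $c_n\ge e_k(1-p_k)$ where $p_k$ is the probability that a random excursion of $\E_k$ has height at least $\sqrt n$. The delicate point---that $p_k$ stays bounded away from $1$---is dispatched by citing the convergence of rescaled $(a,b)$-excursions to the Brownian excursion~\cite{kaigh}, which gives $p_k\to p<1$. Taking $n$th roots finishes.

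It is worth noting that the ``seemingly more elementary alternative'' you describe at the end, namely $m\,f\,m^{\lceil M/a\rceil}$ with $f$ an excursion and $M$ its height, is essentially the paper's construction; the ``delicate control of the distribution of $M$'' that made you abandon it is precisely what the Brownian-excursion limit theorem provides. So the trade-off is this: the paper's proof is shorter but imports a functional limit theorem; yours is more self-contained (it only needs the growth rate of excursions from~\cite{BaFl02}, Perron--Frobenius, and Fekete-type supermultiplicativity) at the price of the periodicity and strong-connectedness bookkeeping you flag. Both are valid; your version has the advantage of not leaving the realm of elementary combinatorics and linear algebra.
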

\begin{proof}
 The inequality~\eqref{ineq0} still holds, and gives  the
upper bound~\eqref{upper} on the number of culminating paths.

Let us now prove that the growth constant of culminating walks is
still $\alpha_{a,b}$ by constructing a  large class of such walks.
Let $\E_n$ be the set of excursions of length $n$ (from now on, we
drop the superscripts $a$ and  $b$).  Such excursions only exist
when $n$ is a multiple of $a+b$, and
the number  $e_n$ of such walks then satisfies
$$
e_n\sim \kappa \alpha_{a,b}^n n^{-3/2}
$$
for some positive constant $\kappa$.
It is known that random $(a,b)$-excursions of length $n$ converge in law to the
Brownian excursion, after normalising the length by $n$ and the height
by $\kappa' \sqrt n$, for some constant $\kappa' $ depending on $a$ and
$b$~\cite{kaigh}.  This implies that the (normalized) height of a
discrete excursion converges in law to the height of the Brownian
excursion (described by a theta distribution). In particular, the
probability $p_n$ that an excursion of $\E_n$ has height larger than
$\sqrt n$ tends to a limit $p<1$  as $n$ goes to infinity.
Take an excursion of $\E_k$ of height less than $\sqrt n$, with
$$
k=(a+b) \left\lfloor \frac{n-1-\sqrt n}{a+b}\right\rfloor
$$
and append one up step at its left, and $n-k-1$ up steps at its right:
this gives a culminating walk of length $n$, which proves that
$$
c_{n } \ge e_k (1-p_k).
$$
Taking $n$th roots gives the required lower bound on the growth of $c_n$.
\end{proof}
Hence there are exponentially few walks of size $n$ with steps $+a,
 -b$ that are culminating.
It is  likely that $c_n$ behaves like $\alpha_{a,b}^n
n^{-3-\gamma}$, for some $\gamma\ge 0$ that remains to be determined.
 Note that the final height of an $n$-step meander is known to
have a {discrete} limit law as $n\rightarrow \infty$~\cite{BaFl02}.

\section{Random generation of positive walks}

\label{sec:random-meander}

 The random generation of positive walks will be a preliminary
 step in some of the algorithms we present in the next section for the
 generation of culminating walks.  The main ideas underlying
 the generation are the same for both classes of walks, but the class
 of positive walks is  simpler.
	We apply three  different approaches to their random generation:
\emm recursive methods, (two versions),
	\emph{anticipated   rejection}, and  \emm Boltzmann  sampling,. The
	choice of the best algorithm depends on the drift, as summarized in
	the top part of   Table~\ref{tab:res}.
We denote by $\mathcal{P}^{a,b}$ the language of positive walks, but the
	superscript $a,b$ will often be dropped.

\subsection{Recursive step-by-step approach} \label{sec:recursive-meanders}
The first approach we present is elementary: we construct positive walks
step-by-step, choosing at each time an up or down step
\emm with the right probability,.
This is the basis of the  recursive approach introduced in~\cite{wilf77}.
Here are the three  ideas underlying the algorithm:
\begin{itemize}
\item Let $\W$ be a language, and let $\W_p$ denote the language of the
prefixes of words of $\W$. Assume that for all $w \in \W_p$ such that
$|w|\le n$, we know the number $N_w(n)$ of words of $\W$ of length $n$
beginning with $w$ (we call these word  \emm extensions, of
$w$). Then it is possible to draw  uniformly words of length $n$ in
$\mathcal{W}$ 
as follows. One starts  from the empty word, and adds steps incrementally. If
		at some point the prefix that is built is $w$, one
		adds the letter $x$ to $w$ with probability
		$N_{wx}(n)/N_w(n)$.
\item When $\W=\PP^{a,b}$, the number of extensions of length $n$
	of a prefix $w\in \W_p$ depends   only  on two parameters:
		\begin{itemize}
			\item[--] the length difference $i =n-|w|$,
			\item[--] the final height of $w$,
$j =\phi_{a,b}(w)$,
		\end{itemize}
\item Let  $p_{i,j}$ be the number of extensions of length $n$ of such a
prefix $w$. The numbers $p_{i,j}$ obey the following recurrence:
$$
\begin{array}{lllll}
p_{i,j}& = &p_{i-1,j+a} + {\mathds{1}}_{j> b}
\, p_{i-1,j-b} &
\hbox{for }  i\ge 1,\\
p_{0,j}& =& 1.
\end{array}
$$
\end{itemize}
 As the two parameters $i$ and $j$ are bounded by $n$ and $an$
 respectively, the precomputation of the numbers $p_{i,j}$ takes
 ${O}(n^2)$ arithmetic operations
 and requires to store $O(n^2)$ numbers.
 Then, the  generation of a random word of length $n$
can be performed in linear time.
However, one should take into account the cost  due to the
size of the numbers in the precomputation stage. Indeed,
 the numbers $p_{i,j}$ are exponential in $n$,
so that the actual
 time-space complexity for this  stage may  grow to  ${O}(n^3)$.
However, using a floating-point technique adapted from~\cite{DZ99R},
it should be possible to
take advantage of the numerical stability of the algorithm to reduce
the space needed to ${O}(n^{2+\varepsilon})$.

This naive recursive approach is less efficient than the one presented
below, which is based on context-free grammars. But it will be
easily adapted to the generation of culminating walks, which cannot be
generated via a grammar, as was proved in Section~\ref{sec:languages}.

\subsection{Recursive approach via context-free grammars}
\label{sec:grammars}

It is easy to see that the language $\mathcal{P}^{a,b}\equiv
\mathcal{P}$ is recognized
by a non-deterministic push-down automaton. This implies that
$\mathcal{P}$ is {context-free}.
 The same holds for the language $\mathcal{D}^{a,b}\equiv \mathcal{D}$
of excursions.  A non-ambiguous context-free grammar
generating excursions is given explicitly in~\cite{Duchon98}. It
suffices to add one equation to obtain a non-ambiguous
grammar generating positive walks:
\beq\label{grammar}
\begin{array}{llllllll}
	\D&= &
\varepsilon + \sum_{k=1}^a \cL_k \cR_k ,& \hskip 10mm
& \cL_i&= &
{\mathds{1}}_{i=a} m \ \D + \sum_{k=i+1}^a \cL_k \cR_{k-i},\\
\cP&= &\varepsilon +
\sum_{i=1}^a \cL_i \cP ,&
& \cR_j&= &
{\mathds{1}}_{j=b} \overline m\  \D + \sum_{k=j+1}^b \cL_{k-j} \cR_k.
\end{array}
\eeq
In this system, $\varepsilon $ is the empty word, $\D$ (resp. $\cP$)
is the language of
excursions (resp. positive walks) while $\cL_i$, $1\le i \le
a$ and $\cR_j$, $1\le j\le b$, are $a+b$ auxiliary languages defined
in~\cite{Duchon98}. As above, $m$ and $\mb$ are the up and down
letters in our alphabet.

From this grammar, we can apply the recursive
approach  of~\cite{flajoletcalculusINRIA}
for the uniform generation of \emm decomposable, objects,
implemented in the \texttt{combstruct} package of Maple
or in the stand-alone software \texttt{GenRGenS}~\cite{PoTeDe06}.
The generation of positive walks of size $n$  begins with the
precomputation of $O(n)$ large numbers.
These numbers count words of length $r$, for all $r\le n$, in each of
the languages involved in the grammar.
The fastest way to get them is to convert  the algebraic
system~\eqref{grammar} into a system of linear differential equations,
which, in turn, yields a system of  linear recurrence relations (with
polynomial coefficients)  defining the requested numbers. 
This step requires a linear number of arithmetic operations. 
But one has to multiply numbers whose size (number of digits) is $O(n)$, which
may result, in practice, in a quadratic time-complexity for the
precomputation stage. 
 Then, the generation of a random positive walk can be performed in
 time $O(n\log n)$. 

 Note that a careful implementation \cite{DDZ98E} of the floating point
	approach of~\cite{DZ99R} using an arbitrary-precision floating-point
	computation library yields a $O(n^{1+\varepsilon})$ complexity after
a $O(n^{1+\varepsilon})$ precomputation.

\subsection{Anticipated rejection}\label{sec:rejection-meanders}
The principle of this approach is to start
with  an empty walk, and then add 
successive  up and down steps by flipping an unbiased
coin until the walk  reaches the desired length $n$, or a non-positive
ordinate. In this case, the walk is rejected and the
procedure starts from the beginning.
Of course, no precomputation nor non-linear storage is required.
This principle was applied to meanders, in the case $a=b=1$,
in~\cite{BaPiSp92},  as a first  step towards the uniform random
generation of \emm directed  animals,.
The analysis of this algorithm
 yielded a linear  time-complexity, later generalized in~\cite{BaPiSp94}
to the case of coloured walks, in which up, down, and level
steps come respectively in $p$, $q$ and $r$ different colours.
There, it was  shown that the time-complexity is linear
when $p\ge q$, but exponential  when $p<q$.

Unsurprisingly, we obtain similar results for the general $(a,b)$-case.
\begin{proposition}
The anticipated rejection scheme applied to the uniform random generation of
$(a,b)$-positive walks
has a linear
time-complexity when $a\ge b$ and an exponential complexity in 
$\Theta(
(2/\alpha_{a,b})^n n\sqrt{n})
$
 when $a<b$, with $\alpha_{a,b}=\frac{a+b}{\sqrt[a+b]{a^ab^b}}<2$.
\end{proposition}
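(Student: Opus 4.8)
The plan is to analyse the anticipated rejection scheme as a sequence of independent \emph{attempts}, each attempt being a sequence of fair coin flips that either succeeds (produces a positive walk of length $n$) or fails (the walk touches a non-positive level before reaching length $n$). The running time is the total number of coin flips, so if $p_n$ denotes the probability that a single attempt succeeds and $T$ denotes the expected number of steps generated in one attempt, then the expected total cost is $T/p_n$ (a standard waiting-time computation: the number of attempts is geometric with mean $1/p_n$, and each attempt costs $T$ in expectation). First I would pin down $p_n$: a single attempt of $n$ flips produces a uniform word of $\{m,\mb\}^n$, so $p_n = p_n^{a,b}/2^n$, where $p_n^{a,b}$ is the number of positive walks counted in Section~\ref{sec:positive}. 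Second I would bound $T$, the expected length generated before rejection or completion; since each attempt generates at most $n$ steps, $T\le n$, and in the transient (non-positive-drift) regime one can do better, but $T=O(n)$ always suffices for the upper bounds.

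For the case $a\ge b$ (drift $\ge 0$), the estimate~\eqref{positive-asympt} gives $p_n^{a,b}\sim \kappa_{a,b}2^n$ when $a>b$, so $p_n$ is bounded below by a positive constant; when $a=b=1$ we instead have $p_n^{1,1}\sim 2^n/\sqrt{2\pi n}$, so $p_n\sim 1/\sqrt{2\pi n}$ decays only polynomially. In the strictly positive drift case, $T/p_n = O(n)$ immediately. In the null drift case $T/p_n = O(n)\cdot O(\sqrt n) = O(n^{3/2})$ with this crude bound on $T$; to recover the claimed \emph{linear} complexity one must show $T=O(\sqrt n)$, i.e.\ that a failing attempt typically rejects after $O(\sqrt n)$ steps. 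This follows from the classical fact that a symmetric $\pm1$ random walk (or, after the obvious reduction, an $(a,a)$-walk) first returns to a non-positive level in expected time $\Theta(\sqrt n)$ when conditioned to be killed before time $n$; more precisely, $\sum_{k\le n} k\cdot(\text{prob.\ of first non-positive visit at step }k)$ is $O(\sqrt n)$ because the first-passage probabilities decay like $k^{-3/2}$. Combining $T=O(\sqrt n)$ with $1/p_n=O(\sqrt n)$ gives $O(n)$, matching~\cite{BaPiSp92,BaPiSp94} in the case $a=b=1$ and extending it to all $a\ge b$.

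For the case $a<b$ (negative drift), the growth rate of positive walks is $\alpha_{a,b}<2$ by~\eqref{alpha-def}, so $p_n^{a,b}\sim \kappa_{a,b}\,\alpha_{a,b}^n/n^{3/2}$ and hence $p_n \sim \kappa_{a,b}\,(\alpha_{a,b}/2)^n n^{-3/2}$. The number of attempts is geometric of mean $1/p_n = \Theta\!\bigl((2/\alpha_{a,b})^n n^{3/2}\bigr)$. For the expected cost per attempt I would argue $T=\Theta(n)$: the upper bound $T\le n$ is trivial, and for the matching lower bound note that with probability bounded below (indeed probability $\to$ a positive constant, or even one can just use $\ge \alpha_{a,b}^{-o(n)}$ and absorb it) an attempt survives to length $n$, or alternatively that the expected survival time is at least a constant fraction of $n$ because, conditionally on the walk being one of the $\Theta(\alpha_{a,b}^n n^{-3/2})$ positive walks, it uses all $n$ steps — and these already contribute $n\cdot p_n$ to $T$, which when divided by $p_n$ shows the successful attempts alone force the cost-to-produce-one to be $\Theta\bigl((2/\alpha_{a,b})^n n^{3/2}\bigr)\cdot$(something), and multiplying by $T=\Theta(n)$ is what gives the stated $\Theta\bigl((2/\alpha_{a,b})^n n\sqrt n\bigr)$. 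I expect the main obstacle to be the sharpness of the constants: getting $T=\Theta(n)$ (rather than merely $O(n)$) with enough precision to justify a two-sided $\Theta$ rather than an $O$, and, in the null-drift subcase, establishing the $O(\sqrt n)$ bound on the expected rejection time cleanly. Both can be handled by invoking the local limit theorem / first-passage asymptotics for $\pm1$ walks (and their $(a,a)$-refinements), which are exactly the ingredients already used implicitly in Section~\ref{sec:asympt} via~\cite{BaFl02,kaigh}; the bookkeeping to transfer these to the general coprime $(a,b)$ setting, after the reduction that replaces the $(a,b)$ steps by a lattice rescaling, is routine but must be done carefully.
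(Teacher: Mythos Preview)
Your framework is sound and essentially equivalent to the paper's: the paper quotes the identity
\[
f_{\mathcal P}(n)=\frac{[z^n]\frac{z}{1-z}P(z/2)}{[z^n]P(z/2)},
\]
whose denominator is your success probability $p_n=p_n^{a,b}/2^n$ and whose numerator is $\sum_{j=0}^{n-1}p_j/2^j=E[T_1]$, so this is exactly your Wald-type decomposition $T/p_n$. Your treatment of the cases $a>b$ and $a=b$ is correct (for $a=b$, the partial sums $\sum_{j<n}p_j/2^j$ behave like $\sum_{j<n}c/\sqrt j\sim 2c\sqrt n$, which is precisely your $T=O(\sqrt n)$).

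The genuine gap is in the negative-drift case. Your claim $T=\Theta(n)$ is false: when $a<b$ the walk has negative drift, so each attempt dies after $O(1)$ steps in expectation. Concretely,
\[
E[T_1]=\sum_{j=0}^{n-1}\frac{p_j^{a,b}}{2^j}\longrightarrow P(1/2)<\infty,
\]
a \emph{constant}, because $p_j^{a,b}/2^j\sim\kappa(\alpha_{a,b}/2)^j j^{-3/2}$ is summable. Your heuristic (``with probability bounded below an attempt survives to length $n$'') is wrong by an exponential factor, and your alternative (``the successful walks contribute $n\cdot p_n$ to $T$'') only gives $E[T_1]\ge n\cdot p_n/2^n$, which is exponentially small. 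If you carry your claimed $T=\Theta(n)$ through, you obtain $\Theta((2/\alpha_{a,b})^n n^{5/2})$, which is off by a factor of $n$ from the stated $\Theta((2/\alpha_{a,b})^n n\sqrt n)=\Theta((2/\alpha_{a,b})^n n^{3/2})$. The correct computation is $T\cdot(1/p_n)=\Theta(1)\cdot\Theta((2/\alpha_{a,b})^n n^{3/2})$.

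The paper avoids this pitfall by applying singularity analysis directly to the generating-function ratio: for $a<b$ the series $P(z/2)$ is analytic at $z=1$, so the numerator $\frac{z}{1-z}P(z/2)$ has a simple pole there with coefficient asymptotic to $P(1/2)$, while the denominator coefficient is $p_n/2^n\sim\kappa(\alpha_{a,b}/2)^n n^{-3/2}$. This handles all three drift regimes uniformly and makes the constant $T\to P(1/2)$ transparent.
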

\begin{proof}
We first note that the language
$\mathcal{P}$ of positive walks is a  \emph{left-factor language}.
 That is, it is stable by taking prefixes,
and every word of $\mathcal{P}$ is the proper prefix of another  word
of $\mathcal{P}$.
It has been proved in~\cite{Den96a} that the average
 complexity $f_{\mathcal{L}}(n)$ of
the anticipated rejection scheme for a left-factor language $\mathcal{L}$
on a $k$-letter alphabet is
$$
f_{\mathcal{L}}(n)=\frac{[z^n]\frac{z}{1-z}L(z/k)}{[z^n]L(z/k)}
$$
where $L(z)$ is the length \gf\ of the words of $\mathcal{L}$.

We now exploit the results of~\cite{BaFl02}, giving the singular
behaviour of the series $M(z)$ and $E(z)$ that count respectively
meanders and excursions. As a
meander factors uniquely as an excursion followed by a positive walk,
we can derive from~\cite{BaFl02} the singular
behaviour of the series $P(z)=\sum p_n z^n$ that counts positive walks.
This series is always algebraic, so that singularity analysis  applies.
\begin{itemize}
\item[--]
For $a\ge b$, the series $P(z/2)$ has an algebraic singularity at
$z=1$ in $(1-z)^{-\nu}$ (with $\nu=1$ if $a>b$, and $\nu=1/2$ if
$a=b$). Thus $P(z/2)/(1-z)$ has a singular behaviour in $(1-z)^{-\nu-1}$.
A singularity analysis gives $f_{\mathcal{P}}(n)\sim  n/\nu$.

\item[--]For $a<b$, the series $P(z/2)$ has a square-root singularity at
	$2/\alpha_{a,b}>1$, but $P(z/2)/(1-z)$ has a smaller radius of convergence
	$z_c=1$, with a simple pole at this point. This gives
$$f_{\mathcal{P}}(n)\sim \frac{2^n\, P(1/2)}{p_n}
\sim \kappa\left(\frac2{\alpha_{a,b}}\right)^n{n\sqrt{n}}
$$
for some constant $\kappa$.

\end{itemize}

\end{proof}

\subsection{Boltzmann sampling}\label{sec:meanders-boltzmann}
%

A Boltzmann generator~\cite{fullboltz} generates every object in the
class $\cC$ with a probability proportional to $x^n$, where $n$ is the
size of the object. More precisely, for every object $w$ (a walk, in
our context):
$$
\mathbb{P} (w)= \frac {x^{|w|} }{C(x)}
$$
where $C(x)$ is the  \gf\ of the objects of $\cC$. Of course, this
results in a relaxation of the size constraint, since objects of all sizes
can be generated. But, by tuning carefully the parameter $x$ (which
has to be 
smaller than or equal to
the radius of convergence of $C(x)$), and
 rejecting the too large and too small objects, one can
often achieve an approximate-size random sampling, with a \emm
tolerance, $\varepsilon$, in linear time. This means that after a linear
number of real-arithmetic operations,
and a number of attempts that is constant on average,
the algorithm will produce an object of size
$|w|\in[ (1-\varepsilon)n , (1+\varepsilon) n]$, which is uniform among the
objects of the same size.

In particular, the grammar~\eqref{grammar} shows that the class of
positive walks is \emm specifiable, in the sense
of~\cite{fullboltz}. The analysis of the \gfs\ of meanders and
excursions performed in~\cite{BaFl02} shows that the series $P(z)$
counting positive walks is always analytic in a $\Delta$-domain, with
a dominant singularity in $(1-\mu t)^{-\nu}$, where $\nu=1$ if $a>b$,
$\nu=1/2$ if $a=b$ and $\nu=-1/2$ if $a<b$. In the first two cases,
Theorem 6.3 of~\cite{fullboltz} gives an approximate sampling in
linear time (and an exact sampling in quadratic time). In the third
case, the standard deviation of the objects produced by a standard
Boltzmann sampler is much larger than their mean, which makes
rejection costly. However, we can generate instead \emm pointed,
positive walks, that is, positive walks with a distinguished step, and
forget the pointing: as guaranteed by Theorem 6.5 of~\cite{fullboltz},
this gives again an approximate sampling in linear time.

\bigskip

To conclude, the uniform random generation of $(a,b)$-positive walks
of size $n$ can be performed in linear time  when $a\ge b$ by an
anticipated rejection, and this strategy does not require any
precomputations nor storage.
When $a< b$, our best algorithm for exact sampling remains the
recursive approach based on the grammar~\eqref{grammar}. It runs in
$O(n^{1+\varepsilon})$ after a $O(n^{1+\varepsilon})$
precomputation. However, one can achieve,
in linear time and space,
an approximate-size sampling using a Boltzmann generator.

\section{Random generation of culminating walks}\label{sec:random}
\subsection{Recursive step-by-step approach}
\label{sec:recursive}
This elementary procedure, introduced
in~\cite{KuNoPoBIBE04},  generates culminating
walks step by step, choosing every new step with the right
probability. This is again an instance of Wilf's  recursive
method. The arguments given in
Section~\ref{sec:recursive-meanders} for positive walks should now be
replaced by the following ones:
\begin{itemize}
\item For $\W=\C^{a,b}$, the number of extensions of length $n$
of a prefix $w\in \W_p$ depends   only  on three parameters:
	\begin{itemize}
		\item[--] the length difference $i =n-|w|$,
		\item[--] the final height $j =\phi_{a,b}(w)$,
		\item[--] the maximal height $h$ reached by $w$.
	\end{itemize}
\item Let  $c_{i,j,h}$ be the number of extensions of length $n$ of such a
prefix $w$.
The numbers $c_{i,j,h}$ obey the following recurrence:
$$
\begin{array}{lllll}
 c_{i,j,h}& = &c_{i-1,j+a,\max(h,j+a)} +
 {\mathds{1}}_{j>b}\, c_{i-1,j-b,h} &
\hbox{for }  i>1,\\
 c_{1,j,h}& =& {\mathds{1}}_{j+a>h}.
\end{array}
$$
\end{itemize}
As the parameters $i$, $j$ and $h$ are bounded by
$n$, $an$ and $an$ respectively, the precomputation of the numbers
$c(i,j,h)$ takes ${O}(n^3)$ arithmetic operations and requires to
store $O(n^3)$ numbers.   Then, the  generation of a random word of
length $n$ can be performed in linear time.
But again, the numbers $c_{i,j,h}$ are exponential in $n$, so that the
actual time-space complexity of the precomputation stage may grow to
${O}(n^4)$.

The above procedure is easily adapted to generate culminating walks
ending at a prescribed height $k$. The number $ c_{i,j}^{(k)} $
of $i$-step extensions of a prefix ending at height $j$ is given by
$$\begin{array}{lllll}
	c_{i,j}^{(k)}& = & {\mathds{1}}_{j+a<k}\, c_{i-1,j+a}^{(k)} +
 {\mathds{1}}_{j>b}\, c_{i-1,j-b}^{(k)} &
\hbox{for }  i>1,\\
 c_{1,j}^{(k)}& =& {\mathds{1}}_{j+a=k}.
\end{array}
$$
Now $j$ is bounded by $k$, so that we only have to compute a table of
$O(kn)$ numbers, in $O(kn)$ arithmetic operations.  The actual
time-space complexity is likely to grow to $O(kn^2)$ due to the
handling of large numbers.

However, whether the height of the walk is fixed or not, one should be
able to limit the computational overhead
 due to the size of these numbers to
${O}(n^{\varepsilon})$, using a floating-point technique adapted
from~\cite{DZ99R}.

\subsection{Rejection methods}\label{sec:rejection-culminating}

We presented in Section~\ref{sec:rejection-meanders} an example of the
\emm anticipated, rejection approach. The more general \emm rejection
principle, has  been applied successfully  to various
problems~\cite{devroye:rg,BaPiSp92,fullboltz}.
The principle of a  rejection algorithm for words in $\W$ is to  draw
objects uniformly in a superset $\V \supset\W$ until an object of $\W$
is found. The average-case complexity of a such a technique is then $
\zeta(n){v_n}/{w_n}$,  where $\zeta(n)$ is the  cost for the
generation
 of a word of size $n$ in $\V$, and $w_n$ and $v_n$ respectively
 denote the number of words of length $n$ in $\W$ and $\V$.

The aim is to find a superset $\V$ satisfying the following (sometimes
conflicting) requirements:

-- the words of $\V$ can be generated quickly, so that $\zeta(n)$ is
	 small,

-- the set $\V$ is not too large, so that the ratio   ${v_n}/{w_n}$
	 is small.

\noindent
Moreover, testing whether a word of $\V$ actually
belongs to $\W$ should be doable in linear time.
 This is obviously the case when $\W=\C^{a,b}$.

We investigate below
two possibilities for the superset $\V$, while fixing $\W=\C^{a,b}$.

\subsubsection{Drawing from positive walks}
\label{sec:reject-meanders}
Here, we take for $\V$ the set of positive walks.
Their random generation has been discussed in
Section~\ref{sec:random-meander}, and we refer to the last lines of
this section for our conclusions on this question.
\begin{itemize}
\item[--]
When $a<b$, the number $v_n$ of positive walks of length $n$ grows like
$\alpha_{a,b}^n n^{-3/2}$ (up to a multiplicative constant).
If $c_n^{a,b}$ grows like $\alpha_{a,b}^n n^{-3-\gamma}$ for $\gamma
\ge 0$ (see Proposition~\ref{negative-drift}), the cost will be
$O(n^{\gamma+5/2+ \varepsilon} )$, with a preprocessing stage of
${O}(n^{1+\varepsilon} )$.
However, approximate-size sampling can be performed in time
$O(n^{\gamma+5/2} )$, with no preprocessing stage. It suffices to
reject among the set of positive walks generated by a Boltzmann algorithm.

\item[--]
If $a=b$, then $v_n$ grows like $2^n n^{-1/2}$, while $c_n\sim 2^n
/n$ (Proposition~\ref{propo-asympt11}). Hence the cost here is $O(n^{3/2})$.

\item[--]
Finally, for $a>b$, the number of culminating walks grows
like $2^n$ (Proposition~\ref{conj:agtb}). This shows that the
algorithm is  linear.

\end{itemize}

\noindent{\bf Remark.} For $a>b$, culminating walks are so numerous
that we can even perform the rejection in the set of \emm general,
$(a,b)$-walks, and still obtain a linear
complexity, as discussed in the introduction. However, it seems
natural to perform an anticipated rejection, rejecting walks as soon
as they stop being positive: but this  amounts to performing
rejection in the set of positive walks, obtained themselves via an
anticipated rejection from general walks.

\subsubsection{
Drawing from hybrid walks}
\label{section-miroir}

We begin with a simple, yet crucial, observation:
\begin{quote}
	\emm Let $\mir w$ denote the mirror image of the word $w$.
Then, if $w\in \C^{a,b}$,  so is $\mir w$.
\end{quote}
Graphically, taking the mirror image  amounts to a central symmetry on walks.
This remark  implies that, on average, the mid-point of a culminating walk
lies at a height which is half the final height.
This suggests
another possible superset of $\C^{a,b}$ from which we
 may draw, namely the language
$\mathcal{H}^{a,b}$ of \emm
hybrid walks,, defined by
$$
\cH\equiv \mathcal{H}^{a,b}:= \displaystyle \bigcup_{n\ge 0}
\mathcal{P}_{\lfloor{n/2}\rfloor}
\overleftarrow{\mathcal{P}_{\lceil{n/2}\rceil}},
$$
where $ \mathcal{P}$ is the language of positive walks, and
$\overleftarrow{\mathcal{P}}$ the language of
mirror images of positive walks.
 As already observed in Section~\ref{sec:asympt}, $\C^{a,b}  \subset
 \mathcal{H}^{a,b}$.

The intuition behind the choice of the superset $\mathcal{H}^{a,b}$
is that a path that violates
 the positivity (resp. final record) condition is likely to do so at
its beginning (resp. ending). Thus, ensuring positivity on the  first
half of the walk, and  the final record condition on  the second
half, should yield a lower  rejection probability than
 ensuring positivity everywhere,
as we did when drawing from positive walks.

How can one generate 
hybrid walks uniformly at random? As a
hybrid walk  of length $n$
is the (non-ambiguous) concatenation of a positive walk
of size $\lfloor n/2\rfloor$
and of the mirror image of another positive walk,
of size $\lceil n/2\rceil$,
it is sufficient to draw positive walks  uniformly at random.
The cost of the generation of a 
hybrid walk of length $n$ will
be twice the cost of the generation of a positive walk of length
(approximately) $n/2$. We refer again to the end of
Section~\ref{sec:random-meander} for our conclusions on this cost. We
do not use below the Boltzmann sampling for positive walks, since
gluing two positive walks of approximate size $n/2$ does  not give the
same probability to all 
hybrid walks of a given size.

\noindent
Let us now discuss the efficiency of the rejection approach based on
the language $\mathcal{H}$.
\begin{itemize}
\item[--]
When $a<b$,  we have $|\mathcal{H}_n|=\Theta(
	 \alpha_{a,b} ^n/n^3)$, while $m_n =\Theta( \alpha_{a,b}
	^n/n^{3/2})$, so that we gain an order $O(n^{3/2})$  in complexity
	 (comparing with the rejection of positive walks). This leads to a cost
	 $O(n^{\gamma+1+ \varepsilon})$ if $c_n$ scales like
	 $\alpha_{a,b}^n    n^{-3-\gamma}$,
with a $O(n^{1+\varepsilon})$ precomputation.

\item[--]
When $a=b=1$,
	$ |\mathcal{H}_n|=\Theta( 2^n/n)$,
while $m_{n} =\Theta(2^n/\sqrt n)$, so that the gain is of order
	$\sqrt n$.
Consequently,  the complexity of the rejection algorithm based on
$\mathcal{H}$ is \emm linear,. No precomputation nor storage is required.

\item[--]
For $a>b$, we have  $|\mathcal{H}_n|=\Theta(2^n)$, and similarly
$  m_n =\Theta( 2^n)$.
So the complexity gain (compared with the approach that generates
positive walks) can only be $\Theta(1)$. The algorithm is still linear.

\end{itemize}

\section{Conclusion and perspectives}
We have studied  culminating paths, from the point of view of formal
languages, enumerative combinatorics and random generation. Our best
results in terms of random generation are summarized in   Table~\ref{tab:res}.

\begin{table}[tbh]
$\begin{array}{|c|c|c|c|c|c|}\hline
										\mbox{Paths}& \mbox{Method} &\sharp \mbox{ Attempts}   & \mbox{Precomp.} & \mbox{Cost}
\\
\hline\hline


\mathcal{P}^{a,b}  &&&&\\
	 & \mbox{Recursive method, Section~\ref{sec:grammars}:}   &&&\\
&\mbox{standard implementation} & & O(n^2) & O(n \log n)\\
& \mbox{or floating-point implementation.}
&         & O(n^{1+\varepsilon})  & {O}(n^{1+\varepsilon})
\\

 a< b   & \mbox{Approximate-size Boltzmann}, &&&\\
& \mbox{Section~\ref{sec:meanders-boltzmann}} & O(1) &\emptyset & O(n)\\

\hline

 a\ge b   & \mbox{Anticipated rejection,}
&
 O(\sqrt n) \ \ (a=b)    & \emptyset  & {O}(n)
\\
	&  \mbox{Section~\ref{sec:rejection-meanders}} & O(1) \ \ (a>b)&&\\
\hline\hline

\mathcal{C}^{a,b\Rightarrow k} & \mbox{Recursive method,
}              &         & {O}(kn^{1+\varepsilon}) & {O}(n)\\
& \mbox{Section~\ref{sec:recursive} }&&&\\
 \hline \hline

\mathcal{C}^{a,b}   &&&&\\

	& \mbox{Recursive method, \cite{KuNoPoBIBE04} and Section~\ref{sec:recursive}}   &    & {O}(n^{3+\varepsilon}) & {O}(n)
\\
 a<b    &  \mbox{or rejection from 
hybrid walks,}
 &&&\\
& \mbox{Section~\ref{section-miroir}} & {O}(n^\gamma) &
 {O}(n^{1+\varepsilon}) &  {O}(n^{1+\gamma +\varepsilon}) \\

	 \hline
a=b       & \mbox{Rejection from}  &
O(1)
 & \emptyset & O(n) \\
& \mbox{
hybrid walks, Section~\ref{section-miroir}}&&&\\
\hline
%
a>b & \mbox{Rejection from positive walks or}     & {O}(1) & \emptyset  & {O}(n)\\

&\mbox{
hybrid walks, Sections
\ref{sec:reject-meanders} and~\ref{section-miroir}}  &&&\\
	\hline
\end{array}  $
\bigskip
\caption{The complexity of random generation of positive 
 and culminating paths. 
The cost is that of one random drawing, once the
	precomputations have been performed. 
It is assumed that  $c_n  \sim
	\alpha_{a,b}^n n^{-3-\gamma}$ if $a<b$.}
\label{tab:res}
\end{table}

An important question that is left open is to determine
the asymptotic growth of the number of culminating walks when the
drift is negative ($a<b$). One possible approach would be to exploit
the closed form expression of Proposition~\ref{propo-exact}, in the
spirit of Proposition~\ref{propo-asympt11} and~\cite{BaFl02}. The
result might have interesting consequences regarding the random
generation of culminating walks. In
particular, if
$c_n^{a,b} =\Theta((m_{n/2}^{a,b})^2\,n^{-\gamma})=
\Theta(\alpha_{a,b}^nn^{-3-\gamma})$, with $\gamma<2$,
the generation algorithm based on hybrid walks
would be faster than the recursive algorithm, at least for generating
few paths. However, our numerical data   
suggest that the ratio $c_n^{a,b}/(m_{n/2}^{a,b})^2$
decreases at least as fast as $n^{-2}$.
%

It would also be interesting to study how the height is  distributed
on random culminating walks of length $n$. Such a study may 
provide better algorithms for random generation,
especially  in the  $a<b$ case,
where the height is expected to be small.
How does the average height scale with $n$? 
Is there a limiting distribution for some normalized height? This is
related to a more ambitious question: is there a limiting process for
culminating walks, in the same way discrete excursions converge to the
Brownian excursion~\cite{kaigh}, or discrete meanders to the Brownian meander~\cite{iglehart}? In the case $a=b=1$, a candidate for
the limit process could be the meander conditioned (with care) to
reach its maximum at time 1. 
Note that the joint law of the maximum and final position of a meander
is known~\cite{durrett}, and related to the law of the maximum and
minimum of a Brownian bridge, both in the continuous and discrete
cases~\cite{mohanty}. The case where the maximum coincides with the
final position (an event of zero probability in the continuous case) is
 closely related to our culminating walks.


Future extensions of the present work may also include
the study of culminating walks with more than two types of steps, in order to
model different kinds of matches and mismatches,
and thus capture the whole scoring scheme of the FLASH algorithm.
For instance, it is usually considered less drastic
to replace a purine base
by another  purine base (A$\leftrightarrow$G)
rather than a pyrimidine one
in DNA. It is thus natural to  penalize differently different mismatches.
This could be modelled by
introducing  down steps of different heights.

Lastly, a natural, biologically relevant perspective would be to
address the \emph{non-uniform} generation of culminating paths. Indeed,
the  matches and mismatches may not be uniform over
a biological sequence, and be subject to 
 local correlations.
This is classically modelled by a Markov chain (further
conditioned to yield  culminating paths).
Our algorithms could in principle be adapted to this more general context, but their
analysis would need to be carefully worked out. In particular, the drift
of random walks would depend on the chain and differ in general
from $a-b$. We 
naturally expect the efficiency of our algorithms to 
depend  of the model, culminating walks with positive drift being
much easier to generate than those with a negative drift.

\bigskip
\noindent{\bf Acknowledgements.} We are grateful to Jean-François
Marckert for pointing out  the reference~\cite{kaigh}, which
shortened significantly the proof of
Proposition~\ref{negative-drift}. We also thank Jean Bertoin, Philippe
Chassaing, Jean-François Le Gall and Svante Janson for discussions
on the possible limiting process of culminating paths.


\bibliographystyle{plain}
\bibliography{biblio}

\begin{thebibliography}{10}

\bibitem{blast90}
S.~F. Altschul, W.~Gish, W.~Miller, E.~W. Myers, and D.~J. Lipman.
\newblock Basic local alignment search tool.
\newblock {\em J. Molecular Biology}, 215(3):403--410, 1990.

\bibitem{AZ2}
A.~Ayyer and D.~Zeilberger.
\newblock Two dimensional directed lattice walks with boundaries.
\newblock ArXiv cond-mat/0701674.

\bibitem{banderier-these}
C.~Banderier.
\newblock {\em Combinatoire analytique des chemins et des cartes}.
\newblock PhD thesis, Universit\'e Paris 6, 2001.

\bibitem{hexacephale}
C.~Banderier, M.~Bousquet-M{\'e}lou, A.~Denise, P.~Flajolet, D.~Gardy, and
  D.~Gouyou-Beauchamps.
\newblock Generating functions for generating trees.
\newblock {\em Discrete Math.}, 246(1-3):29--55, 2002.

\bibitem{BaFl02}
C.~Banderier and P.~Flajolet.
\newblock Basic analytic combinatorics of directed lattice paths.
\newblock {\em Theoret. Comput. Sci.}, 281(1-2):37--80, 2002.

\bibitem{BaPiSp92}
E.~Barcucci, R.~Pinzani, and R.~Sprugnoli.
\newblock The random generation of underdiagonal walks.
\newblock In P.~Leroux and C.~Reutenauer, editors, {\em Proceedings of 4th
  Conference on Formal Power Series and Algebraic Combinatorics (FPSAC'92)}.
  Universit\'e du Qu\'ebec \`a Montr\'eal, 1992.

\bibitem{BaPiSp94}
E.~Barcucci, R.~Pinzani, and R.~Sprugnoli.
\newblock The random generation of directed animals.
\newblock {\em Theoret. Comput. Sci.}, 127(2):333--350, 1994.

\bibitem{bousquet-petkovsek-recurrences}
M.~Bousquet-M{\'e}lou and M.~Petkov{\v{s}}ek.
\newblock Linear recurrences with constant coefficients: the multivariate case.
\newblock {\em Discrete Math.}, 225(1-3):51--75, 2000.

\bibitem{mbm-icm}
M.~Bousquet-Mélou.
\newblock Rational and algebraic series in combinatorial enumeration.
\newblock In {\em Proceedings of the International Congress of Mathematicians},
  pages 789--826, Madrid, 2006. European Mathematical Society Publishing House.

\bibitem{mbm-excursions}
M.~Bousquet-Mélou.
\newblock Discrete excursions.
\newblock {\em S\'eminaire Lotharingien de Combinatoire}, 57, 2008.
\newblock (electronic) Article B57d, 23 pp. ArXiv math.CO/0701171.

\bibitem{flash93}
A.~Califano and I.~Rigoutsos.
\newblock Flash: A fast look-up algorithm for string homology.
\newblock In {\em Proceedings of the 1st International Conference on
  Intelligent Systems for Molecular Biology}, pages 56--64. AAAI Press, 1993.

\bibitem{carlson}
F.~Carlson.
\newblock \"{U}ber {P}otenzreihen mit ganzzahligen {K}oeffizienten.
\newblock {\em Math. Z.}, 9(1-2):1--13, 1921.

\bibitem{mohanty}
E.~Cs{\'a}ki and S.~G. Mohanty.
\newblock Excursion and meander in random walk.
\newblock {\em Canad. J. Statist.}, 9(1):57--70, 1981.

\bibitem{Den96a}
A.~Denise.
\newblock G\'en\'eration al\'eatoire et uniforme de mots.
\newblock {\em Discrete Math.}, 153:69--84, 1996.

\bibitem{DDZ98E}
A.~Denise, I.~Dutour, and P.~Zimmermann.
\newblock {CS}: a {MuPAD} package for counting and randomly generating
  combinatorial structures.
\newblock In {\em Proceedings of 10th Conference on Formal Power Series and
  Algebraic Combinatorics (FPSAC'98)}, pages 195--204, 1998.
\newblock Also published in MathPAD 8 (1) 1998.

\bibitem{DZ99R}
A.~Denise and P.~Zimmermann.
\newblock Uniform random generation of decomposable structures using
  floating-point arithmetic.
\newblock {\em Theoret. Comput. Sci.}, 218:233--248, 1999.

\bibitem{devroye:rg}
L.~Devroye.
\newblock {\em Non-Uniform Random Variate Generation}.
\newblock Springer-Verlag, New York, 1986.

\bibitem{DiFr00}
P.~Di~Francesco, E.~Guitter, and C.~Kristjansen.
\newblock Integrable 2{D} {L}orentzian gravity and random walks.
\newblock {\em Nuclear Phys. B}, 567(3):515--553, 2000.

\bibitem{Duchon98}
P.~Duchon.
\newblock On the enumeration and generation of generalized {Dyck} words.
\newblock {\em Discrete Math.}, 225(1-3):121--135, 2000.

\bibitem{fullboltz}
P.~Duchon, P.~Flajolet, G.~Louchard, and G.~Schaeffer.
\newblock Boltzmann samplers for the random generation of combinatorial
  structures.
\newblock {\em Combin. Probab. Comput.}, 13(4-5):577--625, 2004.

\bibitem{durrett}
R.~T. Durrett and D.~L. Iglehart.
\newblock Functionals of {B}rownian meander and {B}rownian excursion.
\newblock {\em Ann. Probability}, 5(1):130--135, 1977.

\bibitem{flajolet-gourdon-dumas}
P.~Flajolet, X.~Gourdon, and P.~Dumas.
\newblock Mellin transforms and asymptotics: harmonic sums.
\newblock {\em Theoret. Comput. Sci.}, 144(1-2):3--58, 1995.

\bibitem{flajolet-odlyzko}
P.~Flajolet and A.~Odlyzko.
\newblock Singularity analysis of generating functions.
\newblock {\em SIAM J. Discrete Math.}, 3(2):216--240, 1990.

\bibitem{flajoletcalculusINRIA}
P.~Flajolet, P.~Zimmerman, and B.~Van~Cutsem.
\newblock A calculus for the random generation of labelled combinatorial
  structures.
\newblock {\em Theoret. Comput. Sci.}, 132(1-2):1--35, 1994.

\bibitem{foata}
D.~Foata.
\newblock A noncommutative version of the matrix inversion formula.
\newblock {\em Adv. in Math.}, 31(3):330--349, 1979.

\bibitem{gessel-factorization}
I.~M. Gessel.
\newblock A factorization for formal {L}aurent series and lattice path
  enumeration.
\newblock {\em J. Combin. Theory Ser. A}, 28(3):321--337, 1980.

\bibitem{hopcroft}
J.~E. Hopcroft and J.~D. Ullman.
\newblock {\em Formal languages and their relation to automata}.
\newblock Addison-Wesley, 1969.

\bibitem{iglehart}
D.~L. Iglehart.
\newblock Functional central limit theorems for random walks conditioned to
  stay positive.
\newblock {\em Ann. Probability}, 2:608--619, 1974.

\bibitem{kaigh}
W.~D. Kaigh.
\newblock An invariance principle for random walk conditioned by a late return
  to zero.
\newblock {\em Ann. Probability}, 4(1):115--121, 1976.

\bibitem{KuNoPoBIBE04}
G.~Kucherov, L.~Noe, and Y.~Ponty.
\newblock Estimating seed sensibility on homogenous alignments.
\newblock In IEEE, editor, {\em Proceedings of Fourth IEEE Symposium on
  Bioinformatics and Bioengineering (BIBE04)}, page 387, 2004.

\bibitem{labelle}
J.~Labelle.
\newblock Langages de {D}yck g\'en\'eralis\'es.
\newblock {\em Ann. Sci. Math. Qu\'ebec}, 17(1):53--64, 1993.

\bibitem{labelle-yeh}
J.~Labelle and Y.~N. Yeh.
\newblock Generalized {D}yck paths.
\newblock {\em Discrete Math.}, 82(1):1--6, 1990.

\bibitem{lou97}
G.~Louchard.
\newblock Asymptotic properties of some underdiagonal walks generation
  algorithms.
\newblock {\em Theoret. Comput. Sci.}, 218(2):249--262, 1999.

\bibitem{fasta88}
W.~R. Pearson and D.~J. Lipman.
\newblock Improved tools for biological sequence comparison.
\newblock {\em Proceedings of the National Academy of Sciences of the USA},
  85:2444--2448, 1988.

\bibitem{PoTeDe06}
Y.~Ponty, M.~Termier, and A.~Denise.
\newblock Gen{RG}en{S}: Software for generating random genomic sequences and
  structures.
\newblock {\em Bioinformatics}, 22(12):1534--1535, 2006.

\bibitem{stanley-vol1}
R.~P. Stanley.
\newblock {\em Enumerative combinatorics. {V}ol. 1}, volume~49 of {\em
  Cambridge Studies in Advanced Mathematics}.
\newblock Cambridge University Press, Cambridge, 1997.

\bibitem{stanley-vol2}
R.~P. Stanley.
\newblock {\em Enumerative combinatorics. {V}ol. 2}, volume~62 of {\em
  Cambridge Studies in Advanced Mathematics}.
\newblock Cambridge University Press, Cambridge, 1999.

\bibitem{viennot-heaps}
G.~X. Viennot.
\newblock Heaps of pieces. {I}. {B}asic definitions and combinatorial lemmas.
\newblock In {\em Combinatoire \'enum\'erative (Montr\'eal, Qu\'ebec, 1985)},
  volume 1234 of {\em Lecture Notes in Math.}, pages 321--350. Springer,
  Berlin, 1986.

\bibitem{wilf77}
H.~S. Wilf.
\newblock A unified setting for sequencing, ranking, and selection algorithms
  for combinatorial objects.
\newblock {\em Advances in Math.}, 24(3):281--291, 1977.

\end{thebibliography}
\end{document}